\def\smart{0} % 1 for smartphones, 0 for A4 paper
\numberwithin{equation}{section}
\theoremstyle{plain}
\newtheorem{theorem}{Theorem}[section]
\newtheorem{lemma}{Lemma}[section]
\newtheorem{corollary}[theorem]{Corollary}
\newtheorem{question}{Question}
\theoremstyle{definition}
\newtheorem{definition}{Definition}[section]
\newtheorem{remark}{Remark}[section]
\newtheorem{case[theorem]}{Case}
\numberwithin{equation}{section}
\newcommand{\pr}{\mathbb{P}}
\newcommand{\beql}[1]{\begin{equation}\label{#1}}
\newcommand{\eeq}{\end{equation}}
\newcommand{\comment}[1]{}
\newcommand{\Ds}{\displaystyle}
\newcommand{\Abs}[1]{{\left|{#1}\right|}}
\newcommand{\Mean}{{\mathbb{E}}}
\newcommand{\Ceil}[1]{{\left\lceil{#1}\right\rceil}}
\newcommand{\Prob}[1]{{{\mathbb{P}}\left[{#1}\right]}}
\newcommand{\Set}[1]{{\left\{{#1}\right\}}}
\renewcommand{\AA}{{\mathbb A}}
\newcommand{\RR}{{\mathbb R}}
\newcommand{\ZZ}{{\mathbb Z}}
\newcommand{\NN}{{\mathbb N}}
\newcommand{\One}[1]{{\mathds{1}}\left(#1\right)}
\begin{document}

\ifthenelse{\smart=1}{\sloppy}{}
	
\title[Large sets with no copies of a sequence]{Large sets containing no copies of a given infinite sequence}

\author{Mihail N. Kolountzakis}
\address{Department of Mathematics and Applied Mathematics, University of Crete, Voutes Campus, 70013 Heraklion, Crete, Greece.}
\email{kolount@gmail.com}

\author{Effie Papageorgiou}
\address{Department of Mathematics and Applied Mathematics, University of Crete, Voutes Campus, 70013 Heraklion, Crete, Greece.}
\email{papageoeffie@gmail.com}

\date{August 4, 2022. Revised August 19, 2023.}

\thanks{Supported by the Hellenic Foundation for Research and Innovation, Project HFRI-FM17-1733 and by University of Crete Grant 4725}

\begin{abstract}
Suppose $a_n$ is a real, nonnegative sequence that does not increase exponentially. For any $p<1$ we construct a Lebesgue measurable set $E \subseteq \RR$ which has measure at least $p$ in any unit interval and which contains no affine copy $\Set{x+ta_n:\ n\in\NN}$ of the given sequence (for any $x \in \RR, t > 0$). We generalize this to higher dimensions and also for some ``non-linear'' copies of the sequence. Our method is probabilistic.
\end{abstract}

\subjclass[2020]{28A80, 05D40}

\keywords{Erd\H os similarity problem, Euclidean Ramsey theory, Probabilistic Method}

\maketitle

\tableofcontents

\setlength{\parskip}{0.5em}

\section{Introduction}

In Euclidean Ramsey Theory one is interested in assuming some kind of largeness for sets $E$ in Euclidean space $\RR^d$, or, sometimes in $\ZZ^d$, and concluding that $E$ then contains a ``copy'' of a pattern. The most famous such example is perhaps Szemeredi's Theorem \cite{szemeredi1975sets} which states that any subset of the integers with positive density contains aribtrarily long arithmetic progressions. Another well known example is the theorem of Falconer and Marstrand \cite{falconer1986plane}, Furstenberg, Katznelson and Weiss \cite{furstenberg1990ergodic} and Bourgain \cite{bourgain1986szemeredi} (see also \cite{kolountzakis2004distance}) that if the set $E \subseteq \RR^d$ has positive Lebesgue density (this means that there are arbitrarily large cubes where $E$ takes up at least a constant fraction of the measure) then its points implement all sufficiently large distances (conjecture by Sz\'ekely \cite{szekely1983remarks}).

Another well known problem, very much related to the contents of this paper, is the so-called \textit{Erd\H os similarity problem}: A set $\AA \subseteq \RR$ is called {\em universal in measure} if whenever $E \subseteq \RR$ has positive Lebesgue measure we can find an affine copy of $A$ contained in $E$. In other words $x + t\AA \subseteq E$ for some $x \in \RR, t > 0$. It is easy to see that every finite set $\AA$ is universal (just look close enough to some point of density of $E$, shrink $\AA$ enough and average the number of points of the copy of $\AA$ that belong to $E$ over translates of $\AA$ nearby) but it has been conjectured \cite{erdos2015my} (see also \cite[p.\ 183]{croft2012unsolved}) that no infinite set $\AA$ can be universal in measure. This is known for many classes of infinite sets but not for all  \cite{falconer1984problem,gallagher2022topological,humke1998visit,komjath1983large,chlebik2015erd}. Clearly it would suffice to prove this for $\AA$ being a positive sequence $a_n$ decreasing to $0$ but if $a_n$ decays fast to 0 (so it is in some sense sparse, hence not that hard to contain) this is still unknown. On the contrary this is known when $\log \frac{1}{a_n} = o(n)$. This is not known if $a_n = 2^{-n}$, for example.

In this paper we consider an analogue of the Erd\H os similarity problem ``in the large''. Let $\AA \subseteq \RR$ be a discrete, unbounded, infinite set in $\RR$. Can we find a ``large'' measurable set $E \subseteq \RR$ which does not contain any affine copy $x+t\AA$ of $\AA$ (for any $x \in \RR, t > 0$)? Our attention to this problem was drawn by a recent paper by Bradford, Kohut and Mooroogen \cite{bradford2023large} in which the authors prove that if $\AA$ is an infinite arithmetic progression then this is indeed possible: for any $p \in [0, 1)$ they construct a Lebesgue measurable set $E$, with measure at least $p$ in any interval of length 1, which does not contain any affine copy of $\AA$. This is clearly equivalent to being able to obtain, for any $p \in [0, 1)$ a set $E$ avoiding all infinite arithmetic progressions and having measure $\ge p$ in any interval of length 1 \textit{whose endpoints are integers}.
(Indeed, if the set $E$ has measure at least $p$ in every interval of the form $[n, n+1]$, $n\in\ZZ$, then, since for any $x$ the interval $[x, x+1]$ is contained in the union of two such unit-length intervals with integer endpoints, we obtain that $[x, x+1]\setminus E$ has measure at most $2(1-p)$. Since $p$ can be as close to 1 as we want, this implies that $[x, x+1]\setminus E$ has measure as close to 0 as we want.)
From now on we follow this simplification and we deal only with intervals with integer endpoints (in any dimension).

We generalize the result of \cite{bradford2023large} to sequences of nonnegative numbers $\AA$ which do not grow too fast. To state our result, we introduce the following class of sequences.
	
\begin{definition}\label{def:class-A}
We say that a real sequence  $\AA = \Set{a_n,\ n \in \NN}$ is in the \textit{class (A)} if 
\begin{enumerate}[label=(\arabic*), ref=\ref{def:class-A}.(\arabic*)]
\item \label{zero} $a_0=0$,
\item  $a_{n+1}-a_{n} \geq 1$, for every $n\in \mathbb{N}$.
\item\label{growth} $\log a_n = o(n)$
\end{enumerate}
\end{definition}
\noindent\textbf{Remark}. Since the problem we are studying is translation invariant 
condition \ref{zero} in Definition \ref{def:class-A} is unnecessary, but we keep it as it
simplifies the proofs somewhat.

Writing
\beql{counting}
A(t) = \Abs{\AA \cap [0, t]}
\eeq
for the \textit{counting function} of the set $\AA$, notice that the growth condition \ref{growth} is equivalent to the limit, as $t \to +\infty$,
\beql{growth1}
\frac{A(t)}{\log t} \to +\infty.
\eeq

Our main result is the following.
\begin{theorem}\label{MainThm}
Consider the sequence $\mathbb{A}=\{a_n: \; n\in \mathbb{N}\}$ which belongs to the class (A). Then, for each $ 0\leq p<1$, there exists a Lebesgue measurable set $E\subseteq \mathbb{R}$ such that
$$
\Abs{E \cap [m, m+1]} \ge p,\ \ \ \text{ for all } m \in \ZZ,
$$
but $E$ does not contain any affine copy of $\mathbb{A}$.
\end{theorem}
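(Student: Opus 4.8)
The plan is to build $E$ by the probabilistic method, as the complement of a random union of short intervals --- one per unit cell, placed independently --- chosen so that the measure constraint holds automatically, and then to prove that almost surely every affine copy of $\AA$ meets $E^{c}$.

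\textbf{The random set.} Put $\epsilon=1-p$; we may assume $0<\epsilon<1$, since $E=\emptyset$ handles $p=0$. Fix $\delta\in(0,\epsilon]$, let $(U_{m})_{m\in\ZZ}$ be i.i.d.\ uniform on $[0,1)$, and let $B_{u}\subseteq[0,1)$ be the open arc of length $\delta$ obtained by reading $(u,u+\delta)$ modulo $1$ (wrapping around the cell, so that no part of a cell is systematically favoured). Set $F=\bigcup_{m\in\ZZ}\bigl(m+B_{U_{m}}\bigr)$ and $E=\RR\setminus F$. Then $\Abs{F\cap[m,m+1]}=\delta$, so $\Abs{E\cap[m,m+1]}=1-\delta\ge p$ for every $m$, deterministically; it remains to show that almost surely $(x+t\AA)\cap F\neq\emptyset$ for all $x\in\RR$, $t>0$. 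For a \emph{single} pair $(x,t)$ this is easy: since $a_{n+1}-a_{n}\ge1$, the first $N$ points $x+ta_{n}$ are $\ge t$-separated, so $\gtrsim Nt/(1+t)$ of them lie in distinct cells; for those indices the events ``$x+ta_{n}\notin F$'' are independent of probability $1-\delta$, and $N$ is arbitrary, so the copy avoids $F$ with probability $0$. The work is to do this for all (uncountably many) copies at once.

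\textbf{From one copy to all copies.} I would cover $\RR\times(0,\infty)$ by the compact sets $K_{R}=[-R,R]\times[1/R,R]$ ($R\in\NN$) and prove $\Prob{\exists (x,t)\in K_{R}:(x+t\AA)\cap F=\emptyset}=0$ for each fixed $R$. Two ingredients: First, a \emph{robustness} notion --- call $q\in\RR$ \emph{deeply caught} if $(q-\delta/4,q+\delta/4)\subseteq F$; a short case analysis (separating $\{q\}$ near versus far from a cell boundary) shows that this event is measurable with respect to $U_{\Floor{q}-1},U_{\Floor{q}},U_{\Floor{q}+1}$ and that $\Prob{q\text{ deeply caught}}\ge c_{0}(\delta)>0$ uniformly in $q$. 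Second, a \emph{net}: fix a truncation parameter $M\ge1$ and a mesh $\sigma$ with $\sigma(1+M)<\delta/4$, and take a $\sigma$-net $\mathcal N$ of $K_{R}$ with $\Abs{\mathcal N}=O_{\delta}(R^{2}M^{2})$. If $(x,t)\in K_{R}$ has $(x+t\AA)\cap F=\emptyset$ and $(x_{0},t_{0})\in\mathcal N$ lies within $\sigma$ of it, then for every $n$ with $a_{n}\le M$ we have $\Abs{(x+ta_{n})-(x_{0}+t_{0}a_{n})}\le\sigma(1+a_{n})<\delta/4$, so $x_{0}+t_{0}a_{n}$ is \emph{not} deeply caught (otherwise $x+ta_{n}$ would lie in $F$). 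Now there are $A(M)$ indices with $a_{n}\le M$; the corresponding points of the copy through $(x_{0},t_{0})$ are $\ge t_{0}\ge1/R$-separated, and thinning them gives $\gtrsim A(M)/R$ points at pairwise distance $\ge3$, whence the triples of cells governing their ``deeply caught'' events are disjoint and these events independent. Therefore the probability that none of them is deeply caught is $\le e^{-c\,A(M)/R}$ for some $c=c(\delta)>0$, and a union bound over $\mathcal N$ yields $\Prob{\exists (x,t)\in K_{R}:(x+t\AA)\cap F=\emptyset}\le O_{\delta}(R^{2}M^{2})\,e^{-c\,A(M)/R}$.

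\textbf{The crux, and the obstacle.} This bound holds for \emph{every} $M\ge1$, so the probability in question is at most $\inf_{M\ge1}$ of the right-hand side --- and here is where the growth hypothesis is decisive. By \eqref{growth1}, $A(M)/\log M\to\infty$, so for each fixed $R$ the quantity $c\,A(M)/R$ eventually exceeds $3\log M$, forcing $R^{2}M^{2}e^{-c\,A(M)/R}\to0$ as $M\to\infty$; hence the probability is $0$, and summing over $R$ completes the argument. The main obstacle is exactly this balance: the net over the parameters $(x,t)$ costs a factor polynomial in the truncation scale $M$, which has to be beaten by the probability $e^{-c\,A(M)/R}$ that the copy misses all $\gtrsim A(M)/R$ of its ``test points'' --- possible precisely when the counting function outgrows $\log$, i.e.\ under condition \ref{growth}. (A secondary nuisance is the cell-boundary bookkeeping: this is why the arcs wrap, why ``deeply caught'' may depend on three consecutive $U_{m}$, and why the test points are thinned to pairwise distance $\ge3$ rather than $\ge1$.)
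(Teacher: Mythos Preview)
Your argument is correct and takes a genuinely different route from the paper's direct proof in \S\ref{1d}. There $E$ is built as a countable intersection of layers: a finitary lemma produces, for each bounded scale range $[a,b]$ and each $N$, a random set $E_N\subseteq[-N,N]$ obtained by keeping each of $k_N$ subintervals per unit cell independently with probability $p_N\to1$; the density condition then holds only with high probability (via Chernoff), the sets $\widetilde{E_n}$ are intersected over $n$ and then over dyadic scale ranges, and a null set of bad translates must still be excised at the end. The discretization of the scale is done by counting, for fixed $x$, the times some $x+ta_n$ crosses a subdivision endpoint as $t$ increases, which uses only monotonicity in $t$. By contrast you use a single random set with one removed arc per cell, so the density bound holds deterministically and no cleanup is required; you discretize $(x,t)$ jointly by a $\sigma$-net and absorb the perturbation through the ``deeply caught'' margin, letting the truncation level $M\to\infty$ afterward. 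Both proofs invoke the growth hypothesis~\ref{growth} at the same pinch point --- a polynomial number of parameter configurations against the super-polynomial decay $(1-c)^{A(M)}$ --- but yours is more self-contained for affine copies. The paper's crossing-based discretization, however, carries over verbatim to the nonlinear copies of Theorem~\ref{th:trans}, where the scaling functions $\phi(n,\cdot)$ are assumed only increasing; your Lipschitz-type net would need an additional modulus-of-continuity hypothesis there.
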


As in the case of the Erd\H os similarity problem described above, the sparser the set $\AA$ is the easier it should be to be contained in large sets, so it is not surprising that we had to impose a growth condition (to belong to the class (A)). It remains an open question if a similar set $E$ can be constructed when $\AA$ grows exponentially or faster.
\begin{question}\label{q:open-1}
Is there a sequence $0 < a_n \to +\infty$ and a number $p \in [0, 1)$ such that one can find an affine copy of $\AA=\Set{a_n:\ n \in \NN}$ in any set $E \subseteq \RR$ which has measure more than $p$ in any interval of length 1?
\end{question}

Unlike the approach taken in \cite{bradford2023large} our method of proof is probabilistic. We construct a family of random sets and we show that, with high probability, such a random set will have all the properties we want. This method turns out to be extremely flexible, and this allows us to generalize. Not only can we deal with essentially arbitrary and unstructured sequences $\AA$ but we can also relax the sense in which we seek copies of $\AA$ in the large set $E$. Instead of scaling the elements of $\AA$ and translating them
$$
x+t a_n,\ \ x \in \RR, t > 0,
$$
we can allow for more general transformations
\beql{transformation}
x+\phi(n, t) \cdot a_n,\ \ x \in \RR, t > 0.
\eeq

\begin{theorem}\label{th:trans}
Consider the set $\mathbb{A}=\{a_n: \; n\in \mathbb{N}\}$, which belongs to the class (A), and let $\phi(n, t): \NN\times(0, +\infty) \to (0, +\infty)$ be such that for each $n$ the function $\phi(n, t)$ is increasing in $t$ and is such that for all $n \in \NN$ we have
\beql{phi-gap}
C_1 t \le \phi(n+1, t)a_{n+1}-\phi(n, t) a_n
\eeq
and
\beql{phi-upper}
\phi(n, t) \le C_2 t,\ \ \text{ for all } t>0,
\eeq
for some $C_1, C_2 > 0$.
Then, for each $0\leq p<1$, there exists a Lebesgue measurable set $E\subseteq \mathbb{R}$ such that $E$ intersects every interval of unit length in a set of measure at least $p$, but $E$ does not contain the set
$$
\Set{x+\phi(n, t) \cdot a_n: n \in \NN}
$$
for any choice of $x \in \RR, t>0$.
\end{theorem}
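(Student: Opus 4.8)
The plan is to take $E$ to be the complement of a random union of short ``holes'' organised in dyadic scales, to check the measure condition for every realisation, and to prove that with probability one every copy $\Set{x+\phi(n,t)a_n:\ n\in\NN}$ meets a hole. Concretely, for each integer $\ell\ge0$ I would cut $\RR$ into the half-open dyadic cells of length $2^{-\ell}$ and, independently over all cells and all $\ell$, place in each cell a randomly (circularly) translated sub-interval of length $q_\ell 2^{-\ell}$, where $q_\ell=(1-p)2^{-\ell-1}$; calling $H_\ell$ the union of the scale-$\ell$ holes, I set $E=\RR\setminus\bigcup_{\ell\ge0}H_\ell$. Since the scale-$\ell$ cells tile every unit interval $[m,m+1]$ with integer endpoints and contribute total measure $q_\ell$ there, $[m,m+1]\setminus E$ has measure at most $\sum_\ell q_\ell=1-p$, so $\Abs{E\cap[m,m+1]}\ge p$ deterministically. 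The role of scale $\ell$ is to catch copies whose scaling parameter is $t\asymp 2^{-\ell}/C_1$: whenever $C_1t\ge 2^{-\ell}$, hypothesis \eqref{phi-gap} forces consecutive points $y_n:=x+\phi(n,t)a_n$ of a copy to be at distance $\ge C_1t\ge 2^{-\ell}$, so $y_0,y_1,\dots$ lie in pairwise distinct scale-$\ell$ cells; in particular the single scale $\ell=0$ takes care of all $t\ge 1/C_1$.

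Next I would bound, one ``block'' at a time, the probability of a bad copy. Fix a translation window $[j,j+1]$, a bounded dyadic block $B$ of scaling parameters handled by a scale $\ell$, and an integer $N$. Set $\delta_\ell:=q_\ell 2^{-\ell}/4$ and call a point $\delta_\ell$-deep in a hole if its closed $\delta_\ell$-neighbourhood lies in that hole. For a fixed pair $(\tilde x,\tilde t)$ the points $y_0,\dots,y_{N-1}$ lie in $N$ distinct scale-$\ell$ cells and the hole fills a proportion $q_\ell$ of each, so — by independence across those cells — the probability that none of $y_0,\dots,y_{N-1}$ is $\delta_\ell$-deep in the hole of its own cell is $(1-q_\ell/2)^N$. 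To upgrade this to all $(x,t)\in[j,j+1]\times B$ I would cover that rectangle by a net: an $\tfrac12\delta_\ell$-net in $x$, and a partition of $B$ into finitely many subintervals on each of which all the functions $\phi(0,\cdot),\dots,\phi(N-1,\cdot)$ — increasing in $t$ by hypothesis, with ranges bounded via \eqref{phi-upper} — oscillate by less than $\delta_\ell/(2a_{N-1})$; since $a_n\le a_{N-1}$ for $n<N$, the net has at most $C_{\ell,B}\,N\,a_{N-1}$ points, and any $(x,t)$ in the rectangle has a net point $(\tilde x,\tilde t)$ with $\Abs{y_n(x,t)-y_n(\tilde x,\tilde t)}\le\delta_\ell$ for all $n<N$. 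Consequently, if some copy $(x,t)\subseteq E$ then at its matching net point none of $y_0(\tilde x,\tilde t),\dots,y_{N-1}(\tilde x,\tilde t)$ is $\delta_\ell$-deep in its own cell's hole — otherwise the corresponding $y_n(x,t)$ would lie in that hole, hence in $E^c$. A union bound over the net therefore gives
$$
\Prob{\exists\,(x,t)\in[j,j+1]\times B:\ \Set{x+\phi(n,t)a_n:\ n\in\NN}\subseteq E}\ \le\ C_{\ell,B}\,N\,a_{N-1}\,(1-q_\ell/2)^N .
$$

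Now the decisive move is to let $N\to\infty$. By the growth condition \ref{growth}, $a_{N-1}=e^{o(N)}$, which is swamped by $(1-q_\ell/2)^N=e^{-c(\ell)N}$; hence the right-hand side above tends to $0$, so the probability on the left equals $0$. Since there are only countably many windows $[j,j+1]$, $j\in\ZZ$, and countably many blocks $B$ (a dyadic decomposition of $(0,\infty)$), a countable union shows that almost surely $E$ contains no copy $\Set{x+\phi(n,t)a_n:\ n\in\NN}$ at all. Together with the deterministic measure bound, almost every realisation of $E$ has all the required properties, which proves the theorem.

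The step I expect to be the main obstacle is the discretisation of the two-parameter family of copies, and in particular of the scaling parameter $t$: because the maps $t\mapsto\phi(n,t)$ are assumed only monotone — not Lipschitz, not even continuous — there is no fixed finite $t$-net whose error is uniform in the number $N$ of points of the copy that one uses. The way around this is to let the net depend on $N$, accepting that its cardinality then carries a factor $a_{N-1}$, and to use that this factor is only $e^{o(N)}$ — precisely the content of the class-(A) growth condition \ref{growth} — so that the exponentially small per-copy probability $(1-q_\ell/2)^N$ still prevails as $N\to\infty$. This is exactly where membership in class (A) enters, and it is also why the argument stops short of the exponential regime of Question \ref{q:open-1}.
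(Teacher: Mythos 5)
Your construction (multi-scale random holes with a deterministic measure bound, then a net in $(x,t)$ plus a $\delta_\ell$-buffer, plus a ``let $N\to\infty$'' limit) is genuinely different from the paper's route (which keeps each of $k_N$ subintervals of $[m,m+1]$ independently with probability $p_N\to1$, controls the measure via Chernoff, and discretises only the $t$-variable using the \emph{crossing events} of $x+ta_j$ with subinterval endpoints, then integrates exactly over $x$). But as written your argument has a gap in the central probability estimate.

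The claim that $\Prob{\text{none of } y_0,\dots,y_{N-1}\text{ is }\delta_\ell\text{-deep}}=(1-q_\ell/2)^N$ requires $\Prob{y_n\text{ is }\delta_\ell\text{-deep}}\ge q_\ell/2$ for \emph{every} $n<N$. That holds only when $y_n(\tilde x,\tilde t)$ lies at distance at least $\delta_\ell$ from the boundary of its scale-$\ell$ cell: if $y_n$ is within $\delta_\ell$ of the boundary, its closed $\delta_\ell$-neighbourhood sticks out of the cell, hence cannot be contained in a hole that is (even a circularly wrapped) sub-interval of the cell, and $\Prob{y_n\text{ deep}}=0$. Consequently $\Prob{\text{none deep}}$ is bounded \emph{below} by $(1-q_\ell/2)^N$, and in the worst case (all $N$ points near cell boundaries at a net point) it equals $1$, so the union bound gives no information. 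This worst case really occurs: take $a_n=n$, $\phi(n,t)=t$, $C_1=1$; at the net point $\tilde t=2^{-\ell}$ and $\tilde x$ a multiple of $2^{-\ell}$, every $y_n=\tilde x+n2^{-\ell}$ sits exactly on a boundary. You cannot simultaneously read ``deep'' circularly (which would restore the $q_\ell/2$ lower bound but destroys the implication ``$y_n(\tilde x,\tilde t)$ deep $\Rightarrow$ $y_n(x,t)$ in the hole'' at boundary points) and linearly (which keeps the implication but destroys the probability lower bound). The paper sidesteps exactly this by making $E$ a union of full subcells, so that $\mathds{1}_E(y_n)$ is locally constant in $y_n$; then only the \emph{discrete} crossing times of the monotone functions $t\mapsto \phi(n,t)a_n$ need to be checked, and no $\delta$-buffer is needed. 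To rescue your construction you would need an extra device — e.g.\ two interleaved $2^{-\ell}$-grids at every scale with independent holes, so that each $y_n$ is interior in at least one of them, or replacing the fixed $x$-net by exact integration over $x$ (and then facing the same boundary issue in the $t$-net) — neither of which appears in the proposal.
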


We adopt certain arguments from  \cite[Section 3]{kolountzakis1997infinite} where it is proved, on the Erd\H os similarity problem, that sequences with a finite limit, say $0$, which are not decaying very fast (e.g.\ they decay  polynomially or subexponentially but not, for instance, exponentially fast -- compare to our growth condition (\ref{growth})), cannot be universal in measure, by showing the existence of a randomly constructed set $E\subseteq [0,1]$, avoiding all affine copies of the sequence.

The measure assumption makes this problem different than other ``avoidance" problems, where the avoiding set is often taken to have zero Lebesgue measure but to have large Hausdorff
dimension or Fourier dimension. For example, in \cite{keleti2008construction},  a compact
subset of $\mathbb{R}$ is constructed that has full Hausdorff dimension but  does not contain any $3$-term arithmetic progression. See also \cite{cruz2022large,denson2021large,fraser2018large,maga2011full,mathe2017sets,shmerkin2017salem,yavicoli2021large}.

We can also prove the following result in higher dimension. We phrase it as avoiding linear images of a set in Euclidean space into another Euclidean space. In this manner we obtain easily some corollaries, Theorem \ref{MainThm} one of them, and its proof is rather simpler than that of Theorem \ref{MainThm} given in \S \ref{1d}. But it does not extend easily to more complicated transformations such as those in Theorem \ref{th:trans}, so we choose to stay with linear maps.
\begin{theorem}\label{th:infinite}
Let $d_1, d \ge 1$, $b, f >0$, $p \in [0, 1)$.
Let also $\alpha(R)$ be a function satisfying $\Ds\frac{\alpha(R)}{\log R} \to +\infty$ as $R \to +\infty$.

Then if $\AA \subseteq \RR^{d_1}$ is a discrete point set such that
\beql{growth2}
%C_1 R^a \le 
\Abs{\AA \cap B_R(0)} \le C_2 R^b,\ \ \ (R>0)
\eeq
there is a set $E \subseteq \RR^d$ such that
\begin{enumerate}[label=\roman*.]
\item $\Abs{E \cap (m+[0, 1]^d)} \ge p$ for all $m \in \ZZ^d$,
\item\label{inf-contain}
For any linear map $T:\RR^{d_1} \to \RR^d$ if for arbitrarily large values of $R$
\beql{inf-sep}
T(\AA) \cap B_R(0)
\eeq
contains at least $\alpha(R)$ points with separation $R^{-f}$ then
\beql{inf-containment}
T(\AA)  \text{ is not contained in } E.
\eeq
\end{enumerate}
\end{theorem}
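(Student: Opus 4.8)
I would build $E$ by the probabilistic method, in the spirit of \cite{kolountzakis1997infinite}: tile $\RR^d$ by the integer unit cubes $Q_m=m+[0,1]^d$ ($m\in\ZZ^d$), choose in each $Q_m$ an independent random ``hole'' $H_m\subseteq Q_m$ with $\Abs{H_m}\le 1-p$ \emph{deterministically}, and set $E=\RR^d\setminus\bigcup_m H_m$, taking the $H_m$ open so that $E$ is closed. Property (i) then holds whatever the randomness does. The holes must be designed so that a finite $\delta$-separated set of points in $Q_m$ has, over a suitable range of scales $\delta$, a decent chance of meeting $H_m$: I would take $H_m$ to be a union of randomly chosen congruent dyadic sub-cubes at the scales $2^{-j}$ for $1\le j\le J(\|m\|)$, where $J$ grows with $\|m\|$, assigning measure $\asymp (1-p)/J(\|m\|)$ to each scale so that the total stays $\le 1-p$ while at every used scale the hole occupies a fixed positive fraction of the corresponding sub-cubes.

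The core of the argument is a probability estimate. Fix a linear $T$ for which the hypothesis of (ii) holds and a value $R$ witnessing it, and pick $F=F_R(T)\subseteq T(\AA)\cap B_R(0)$ with $\Abs{F}\ge\alpha(R)$ and mutual separation $\ge R^{-f}$. Writing $F=\bigsqcup_m F_m$ according to the cube containing each point, independence gives $\Prob{F\subseteq E}=\prod_m\Prob{F_m\cap H_m=\emptyset}$. The key point is that any cube $Q_m$ whose hole contains a scale $\le$ the separation within $F_m$ forces $F_m$ into $\gtrsim\Abs{F_m}$ distinct hole-cells, each independently ``a hole'' with probability $\asymp(1-p)/J(\|m\|)$, whence $\Prob{F_m\cap H_m=\emptyset}\le\exp(-c\Abs{F_m}/J(\|m\|))$. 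The construction has to be tuned — choosing $J$ to grow slowly, and using \eqref{growth2} to keep the bulk of $F$ in cubes where $J(\|m\|)$ is small relative to $\alpha(R)$ (here $\Abs{\AA\cap B_R}\le C_2R^b$ is essential: it forces a cluster of $\ge\alpha(R)$ points of $T(\AA)$ to sit in cubes with $\|m\|\gtrsim\alpha(R)^{1/b}$, hence with $J(\|m\|)$ controlled) — so that multiplying over $m$ yields
$$
\Prob{F\subseteq E}\ \le\ (1-c)^{\alpha(R)}
$$
for a constant $c=c(d,p)>0$; this is the strength one needs, and extracting it is the crux.

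I would then finish by a covering/Borel--Cantelli argument over the space $\RR^{d_1\times d}$ of linear maps. If $T$ is valid and $T(\AA)\subseteq E$ then, $E$ being closed and $F_R(T)$ finite, $F_R(T)$ lies at positive distance from the open set $\bigcup_m H_m$, for arbitrarily large $R$. Perturbing $T$ by $\eta$ displaces each point of $F_R(T)\subseteq B_R$ by $O(R)\eta$ (with the constant controlled once $\|T\|$ and the least singular value of $T$ are pinned down), so a net of $\{T:\|T\|\le N\}$ of mesh a suitable negative power of $R$ — of size polynomial in $R$ and $N$ — has the property that some net point $T_0$ satisfies $F_R(T_0)\subseteq E'$ for a slight shrinkage $E'$ of $E$ to which the same estimate applies (the hole-cells at the relevant scales are still far coarser than the mesh, so shrinking does not destroy them). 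Summing $(1-c)^{\alpha(R)}$ over such a net along a dyadic sequence of $R$'s and over $N$, and invoking $\alpha(R)/\log R\to+\infty$ to defeat the polynomial net sizes, Borel--Cantelli gives that almost surely no valid $T$ satisfies $T(\AA)\subseteq E$. Degenerate maps $T$, where $T(\AA)$ collapses into a proper subspace so that $T(\AA)\cap B_R$ may be huge, are absorbed by also stratifying the net by the dyadic order of magnitude of the singular values of $T$.

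\textbf{Where the difficulty lies.} The delicate step is making the probability estimate strong enough — of the order $(1-c)^{\alpha(R)}$, uniformly over a polynomial-sized net of linear maps — \emph{when the $\alpha(R)$ separated points are concentrated in very few cubes}. A hole at a single fixed scale catches such a cluster only with a probability bounded away from $1$ but not small, while making a hole rich at arbitrarily fine scales conflicts with $\Abs{H_m}\le 1-p$; the reconciliation is to let the hole in $Q_m$ be multi-scale with a scale-range growing with $\|m\|$, and to use \eqref{growth2} to show that a cluster of $\ge\alpha(R)$ points cannot live in a cube whose index is too small — so the scale it needs is always present and the dilution factor $1/J(\|m\|)$ is controlled. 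Threading this balance (together with the attendant net-mesh bookkeeping and the degenerate case) against the measure budget is the real work; the normalization giving (i) and the Borel--Cantelli step are comparatively routine.
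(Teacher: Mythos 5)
Your plan differs from the paper's in two structural ways: the paper first proves a \emph{finitary} statement (Theorem~\ref{th:finite}) on $[-N,N]^d$ with grid scale $N^{-\gamma}$ and loss budget $1-p_N$, then obtains $E$ as a countable intersection $\bigcap_N \bigl(E_N\cup(\RR^d\setminus[-N,N]^d)\bigr)$; and the paper discretizes $T$-space exactly, by observing that the event ``$T(P)$ meets the same collection of grid cubes'' is constant on the cells of a polynomial-size hyperplane arrangement in $\RR^{d\cdot d_1}$, so that checking one representative $T$ per cell suffices with no perturbation argument at all. By contrast you attempt a single random set with a deterministic measure budget $\Abs{H_m}\le 1-p$ and a multi-scale hole at scales $2^{-1},\dots,2^{-J(\|m\|)}$ in $Q_m$, followed by a net-and-shrinkage reduction of $T$-space and Borel--Cantelli.

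The core difficulty you flag is real, and I do not think your proposed resolution of it is correct. You argue that \eqref{growth2} ``forces a cluster of $\ge\alpha(R)$ points of $T(\AA)$ to sit in cubes with $\|m\|\gtrsim\alpha(R)^{1/b}$,'' and use this to guarantee both that the needed hole scale $\lesssim R^{-f}$ is available in those cubes and that the dilution $1/J(\|m\|)$ is controlled. But \eqref{growth2} is a bound on $\Abs{\AA\cap B_R(0)}$ in the \emph{source} space; after applying a contracting (or degenerate) linear $T$, arbitrarily many points of $T(\AA)$ can land in a bounded neighbourhood of the origin. Quantitatively, if $T$ has least singular value $\sigma$, then $\Abs{T(\AA)\cap B_r(0)}$ can be as large as $C_2(r/\sigma)^b$, which is unbounded as $\sigma\to 0$, so the separated witness set $F_R(T)$ need not avoid small-$\|m\|$ cubes at all (at best you get that \emph{at least one} of its points lies outside a ball of radius $\sim\sigma\alpha(R)^{1/b}$, which both depends on $\sigma$ and is far from the statement you need). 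Since those $T$ must all be handled by the one random set you build, you are forced to have arbitrarily fine hole scales inside a \emph{fixed} $Q_m$; but the finest scale there is $2^{-J(\|m\|)}$, which is fixed once $m$ is fixed, whereas the separation $R^{-f}$ tends to $0$. When $R^{-f}\ll 2^{-J(\|m\|)}$ the cluster points collapse into boundedly many hole-cells and $\Prob{F_m\cap H_m=\emptyset}$ is bounded below by a positive constant, destroying the estimate $\Prob{F\subseteq E}\le(1-c)^{\alpha(R)}$. There is a second, milder loss even where your construction does apply: the dilution $1/J(\|m\|)$ introduces a $\log R$ factor, so the argument as sketched would need $\alpha(R)/(\log R)^2\to\infty$ rather than the stated $\alpha(R)/\log R\to\infty$.

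The paper escapes both problems precisely by \emph{not} committing to a single scale ladder per cube: at stage $N$ the whole grid in $[-N,N]^d$ is at scale $N^{-\gamma}$ with $\gamma>\beta$, so separated points always land in distinct cells, and the measure loss at stage $N$ (the whole $1-p_N$ of it) is spent on that single scale; the intersection over $N$ then endows each fixed $Q_m$ with holes at the arbitrarily fine scales $N^{-\gamma}$, $N\ge\|m\|$, while keeping $\sum_N(1-p_N)<1-p$. If you want to salvage your one-shot construction, you essentially have to rebuild this stratification inside the budget, which brings you back to the paper's intersection. Finally, the net-and-shrinkage step, while probably fixable, is genuinely more delicate than the paper's hyperplane-arrangement reduction: the latter is exact and needs no control of how far $F_R(T)$ sits from the holes, whereas your argument must tune the mesh against the finest hole scale and then re-verify the probability estimate for the shrunk set $E'$, which perturbs the hole measures.
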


\begin{proof}[Proof of Theorem \ref{MainThm} using Theorem \ref{th:infinite}]
Apply Theorem \ref{th:infinite} with $d_1 = 2, d = 1$, $b=1, \alpha(x)=A(x^{1/2})$ (where $A(x)$ is the counting function of $\AA$), $f = 1$ (there is great flexibility in choosing $\alpha(x), b, f$) and the set
$$
P = \AA \times \Set{1} \subseteq \RR^2
$$
to obtain a set $E\subseteq\RR$ satisfying $\Abs{E \cap [m, m+1]} \ge p$ for all $m \in \ZZ$. We see that \eqref{growth2} is satisfied. Let now $T:\RR^2 \to \RR$ be given by the $1 \times 2$ matrix $T = (t, x)$ so that
$$
T(P) = x+t\AA.
$$
For any $x \in \RR, t > 0$, the set $(x+t\AA) \cap [-R, R]$ contains at least $A(R/t)$ points of separation $t$, so, if $R$ is large enough,  it contains $\alpha(R) = A(R^{1/2})$ points with separation $R^{-1}$. It follows that $x+t\AA$ is not contained in $E$.
\end{proof}

\begin{corollary}[Avoiding linear images of general sets in high dimension]\label{cor:seq}
Let $p \in [0, 1)$, $d \ge 1$, $a_n \in \RR^d$, for $n \in \NN$, with $\log\Abs{a_n} = o(n)$ and $\Abs{a_n-a_{n+1}}\ge 1$ for all $n\in\NN$.
Then there is a set $E \subseteq \RR^d$ such that for all $m\in\ZZ^d$ we have $\Abs{E \cap (m+[0, 1]^d)} \ge p$ and such that for all $x \in \RR^d$ and for all non-singular linear $T:\RR^d\to\RR^d$ the set $\Set{x+Ta_n:\ n\in\NN}$ is not contained in $E$.
\end{corollary}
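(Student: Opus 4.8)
The plan is to deduce the corollary from Theorem~\ref{th:infinite}, in the same spirit as the deduction of Theorem~\ref{MainThm}, but after \emph{lifting} the sequence into two extra dimensions. Two features of the hypotheses force this: in $\RR^d$ the gap condition $\Abs{a_n-a_{n+1}}\ge 1$ no longer forces the counting function of $\AA=\Set{a_n:n\in\NN}$ to be polynomially bounded (the set $\AA$ need not even be locally finite), so \eqref{growth2} cannot be applied to $\AA$ directly; and a \emph{linear} map $\RR^{d}\to\RR^{d}$ by itself cannot produce the translation by $x$. Lifting cures both.

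Concretely, I would first observe that we may assume $a_0=0$: replacing $a_n$ by $a_n-a_0$ and $x$ by $x+Ta_0$ changes neither the hypotheses nor the conclusion. Then set $d_1=d+2$ and
\[
\AA'=\Set{(a_n,\,n,\,1):n\in\NN}\subseteq\RR^{d+2}.
\]
Any two distinct points of $\AA'$ differ by at least $1$ in the second-to-last coordinate, and $\Abs{\AA'\cap B_R(0)}\le\Abs{\Set{n\in\NN:n\le R}}\le R+1$, so \eqref{growth2} holds with $b=1$ and $C_2=2$. Moreover $\log\Abs{a_n}=o(n)$ means that for each $\epsilon>0$ one has $\Abs{a_n}\le e^{\epsilon n}$ for all large $n$; hence $B_\rho(0)\subseteq\RR^{d+2}$ contains $(a_n,n,1)$ for every $n$ with $N_\epsilon\le n\le\frac1\epsilon\log\frac\rho2$ once $\rho$ is large, and therefore $\Abs{\AA'\cap B_\rho(0)}\ge\frac1\epsilon\log\frac\rho2-N_\epsilon$ for every $\epsilon$, so $\Abs{\AA'\cap B_\rho(0)}/\log\rho\to+\infty$ as $\rho\to+\infty$. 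I would then apply Theorem~\ref{th:infinite} with this $\AA'$, with $b=1$, $C_2=2$, $f=1$, and with $\alpha(R):=\Abs{\AA'\cap B_{\sqrt R}(0)}$, which satisfies $\alpha(R)/\log R\to+\infty$ by the previous sentence; this produces a set $E\subseteq\RR^d$ with $\Abs{E\cap(m+[0,1]^d)}\ge p$ for all $m\in\ZZ^d$.

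Now fix $x\in\RR^d$ and a non-singular linear $T:\RR^d\to\RR^d$, and apply the conclusion of Theorem~\ref{th:infinite} to the linear map $S:\RR^{d+2}\to\RR^d$, $S(y,s,u)=Ty+ux$, for which $S(\AA')=\Set{x+Ta_n:n\in\NN}$. It remains to verify that, for arbitrarily large $R$, the set $\Set{x+Ta_n:n}\cap B_R(0)$ contains at least $\alpha(R)$ points with pairwise separation $\ge R^{-1}$; once this is checked, Theorem~\ref{th:infinite} gives $\Set{x+Ta_n:n}\not\subseteq E$, which is the assertion. The relevant indices are easy to locate: $\Abs{x+Ta_n}\le\Abs x+\Norm T\Abs{a_n}\le R$ as soon as $\Abs{a_n}\le(R-\Abs x)/\Norm T$, and since $(R-\Abs x)/\Norm T\ge\sqrt R$ for $R$ large, the $\alpha(R)=\Abs{\AA'\cap B_{\sqrt R}(0)}$ indices with $(a_n,n,1)\in B_{\sqrt R}(0)$ all satisfy this. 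Since $T$ is non-singular, $\Abs{T(a_n-a_m)}\ge\sigma_{\min}(T)\Abs{a_n-a_m}$ with $\sigma_{\min}(T)>0$, so it is enough that at least $\alpha(R)$ of these $a_n$ be pairwise $\ge 1$ apart (then the corresponding images are pairwise $\ge\sigma_{\min}(T)>R^{-1}$ apart for $R$ large).

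The step I expect to be the real obstacle is exactly this last one: producing, among the $\gtrsim\alpha(R)$ indices landing in $B_R(0)$, at least $\alpha(R)$ for which the points $a_n$ are \emph{mutually} $\ge 1$-separated (in particular distinct, so that the image set is genuinely that large). The hypothesis $\Abs{a_n-a_{n+1}}\ge 1$ controls only consecutive terms, so this is where one must work, combining it with $\log\Abs{a_n}=o(n)$ — which confines the relevant indices to a controlled initial segment — and, if necessary, passing to a sub-collection of indices to force mutual separation. Everything else (the verification of \eqref{growth2}, the choice of $\alpha$, the identification $S(\AA')=\Set{x+Ta_n:n}$, and the comparison of radii) is routine bookkeeping. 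In particular the $d=1$ instance of the statement contains Theorem~\ref{MainThm}; there the $1$-dimensional gap condition already makes $\AA$ locally finite, which is why the lighter lift $\AA\times\Set1\subseteq\RR^2$ used in its proof above suffices.
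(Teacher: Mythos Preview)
Your approach is a close variant of the paper's: the paper lifts only to $\RR^{2d}$ via $\AA=A\times\{(1,0,\ldots,0)\}$, where $A=\{a_n:n\in\NN\}$ is taken as a \emph{set}, and uses the block map $S(u,v)=Tu+v_1x$ with $\alpha(R)=\#(A\cap B_{\sqrt R}(0))$; it never introduces your index coordinate $n$. Your extra coordinate is a genuine refinement, since it makes \eqref{growth2} hold automatically, whereas (as you correctly observe) under the literal hypotheses the set $A$ need not be locally finite and the paper's $\AA$ need not even be discrete. On the other hand your map $S$ throws the $n$-coordinate away, so the added lift contributes nothing to the separation of the \emph{image}, which you rightly isolate as the crux.

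On that crux the paper says only ``since $T$ is non-singular it follows that \ldots\ $S(\AA)\cap B_R(0)$ contains at least $\alpha(R)$ points with separation $\ge R^{-1}$'', with no further argument. So your write-up is at least as complete as the paper's and more candid about where the work lies. In fact the hypotheses as literally stated even allow $a_n$ to alternate between two fixed points $v,2v$ with $\Abs{v}=1$, in which case $\{x+Ta_n\}$ has only two elements and the conclusion is plainly false; so some tacit strengthening---most naturally, that the $a_n$ are \emph{pairwise} at distance $\ge 1$, not merely consecutively---is being used. Under that reading the obstacle you flag disappears (all image points are then $\ge\sigma_{\min}(T)$ apart), and both your argument and the paper's go through immediately.
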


\begin{proof}
Take $\AA \subseteq \RR^{2d}$ to be the set $A \times \{(\underbrace{1, 0, \ldots, 0}_d)\}$, where $A = \Set{a_n:\ n \in \NN}$. Writing $A(s) = \#(A \cap B_s(0))$ for the counting function of $A$ we have $\Ds \frac{A(R)}{\log R} \to +\infty$. Use Theorem \ref{th:infinite} with $d_1=2d$, $b=1$, $\alpha(R) = A(R^{1/2})$, $f=1$. Let $T:\RR^d\to\RR^d$ be non-singular, $x \in \RR^d$, and define the linear map $S:\RR^{2d}\to\RR^d$ by
$$
S(u, v) = S(u, v_1, v_2, \ldots, v_d) = Tu + v_1 x.
$$
In other words the $d \times (2d)$ matrix of $S$ is $(T \ |\  x \ |\  0)$ in block form. It follows that
$$
S(\AA) = \Set{Ta_n + x:\ n \in \NN}.
$$
Since $T$ is non-singular it follows that if $R>0$ is sufficiently large the set $S(\AA) \cap B_R(0)$ contains at least $\alpha(R)$ points with separation $\ge R^{-1}$ so the set $E\subseteq\RR^d$ furnished by Theorem \ref{th:infinite} does not contain $S(\AA)$, as we had to prove. 
\end{proof}

\begin{corollary}[Corollary 6 from \cite{bradford2023large}]\label{cor:ap}
If $p \in [0, 1)$ then there exists a set $E \subseteq \RR^d$ such that $\Abs{E \cap (m+[0, 1]^d)} \ge p$ for all $m \in \ZZ^d$ and it does not contain any set of the form $x+\NN\Delta$, with $x \in \RR^d$, $\Delta \in \RR^d\setminus\Set{0}$ (an arithmetic progression in $\RR^d$).
\end{corollary}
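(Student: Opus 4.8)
The plan is to derive Corollary \ref{cor:ap} directly from Corollary \ref{cor:seq}, by exhibiting every arithmetic progression $x+\NN\Delta$ as a set of the form $\Set{x'+Ta_n:\ n\in\NN}$ for one fixed sequence $(a_n)$ and a non-singular linear map $T$ depending on $\Delta$. The point is that an arithmetic progression in $\RR^d$ is nothing but a linear image of the arithmetic progression $\NN e_1$ along a coordinate axis, and Corollary \ref{cor:seq} already lets us avoid all non-singular linear images of a single well-chosen sequence.

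Concretely, first I would fix the sequence $a_n = n e_1 \in \RR^d$, where $e_1$ is the first standard basis vector. This sequence satisfies the hypotheses of Corollary \ref{cor:seq}: $\Abs{a_n}=n$, so $\log\Abs{a_n}=\log n = o(n)$, and $\Abs{a_n-a_{n+1}}=1\ge 1$ for all $n$. Applying Corollary \ref{cor:seq} to $A=\Set{a_n:\ n\in\NN}$ produces a set $E\subseteq\RR^d$ with $\Abs{E\cap(m+[0,1]^d)}\ge p$ for all $m\in\ZZ^d$ and such that $\Set{x'+Ta_n:\ n\in\NN}\not\subseteq E$ for every $x'\in\RR^d$ and every non-singular $T:\RR^d\to\RR^d$. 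Next, given any $\Delta\in\RR^d\setminus\Set{0}$ and any $x\in\RR^d$, I would complete $\Delta$ to a basis $\Delta, w_2,\ldots,w_d$ of $\RR^d$ (possible since $\Delta\ne 0$) and define the linear map $T$ by $Te_1=\Delta$, $Te_i=w_i$ for $i\ge 2$. Then $T$ is non-singular and $Ta_n = T(ne_1) = n\Delta$, so
$$
x+\NN\Delta = \Set{x+Ta_n:\ n\in\NN},
$$
which by the conclusion of Corollary \ref{cor:seq} is not contained in $E$. Since $x$ and $\Delta\ne 0$ were arbitrary, $E$ contains no arithmetic progression, which is what we wanted.

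I do not expect a real obstacle here; the statement is essentially a packaging of Corollary \ref{cor:seq}. The only points requiring a little care are (i) that Corollary \ref{cor:seq} demands $T$ be \emph{non-singular}, which is why one must complete $\Delta$ to a full basis rather than use the rank-one map $u\mapsto u_1\Delta$; and (ii) trivial bookkeeping about the index set — with the convention $0\in\NN$ one has $a_0=0$, so $x+Ta_0=x$ and the base point matches exactly; if one preferred a sequence with $\log\Abs{a_n}$ defined for all $n$ one could instead take $a_n=(n+1)e_1$ and absorb the shift by replacing $x$ with $x-\Delta$. Neither point affects the argument.
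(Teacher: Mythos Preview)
Your proposal is correct and follows essentially the same route as the paper: apply Corollary \ref{cor:seq} to the sequence $a_n=n e_1$ and, for each $\Delta\neq 0$, choose any non-singular $T$ with $Te_1=\Delta$. Your version is in fact slightly more detailed than the paper's, which simply says ``any non-singular $d\times d$ matrix $T$ that maps $(1,0,\ldots,0)$ to $\Delta$'' without spelling out the basis completion or the verification of the hypotheses on $(a_n)$.
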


\begin{proof}
We use Corollary \ref{cor:seq} with the sequence $a_n = (n, 0, \ldots, 0) \in \RR^d$, $x \in \RR^d$ and any non-singular $d\times d$ matrix $T$ that maps $(1, 0, \ldots, 0)$ to $\Delta$.
\end{proof}

The outline of this note is as follows. In \S \ref{1d} we give the proof of Theorem \ref{MainThm} without using Theorem \ref{th:infinite},  and we indicate how the same proof also works for Theorem \ref{th:trans}.
In \S \ref{2d} we extend our technique to cover linear transformations of given sequences from one Euclidean space to another and prove Theorem \ref{th:infinite} and some corollaries.

\noindent\textbf{Added in revision}: The results in \cite{burgin2022large}, which came after this paper was submitted, are very relevant to the results in this paper and contain some improvements.

%%%%%%%%%%%%%%%%%%%%%%%%%%%%%%%%%%%%%%%%%%%%%%%%%%%%%%%%%%%%%%
%%%%%%%%%%%%%%%%%%%%%%%%%%%%%%%%%%%%%%%%%%%%%%%%%%%%%%%%%%%%%%
%%%%%%%%%%%%%%%%%%%%%%%%%%%%%%%%%%%%%%%%%%%%%%%%%%%%%%%%%%%%%%

\section{Warm-up and some basic tools: no translational copies} \label{warmup}

In this section we introduce the basic probabilistic method by proving the more restricted Theorem \ref{th:translates}: we can avoid all translations of a given infinite sequence $0 \le a_n \to +\infty$ with a set which is arbitrarily large everywhere. This is considerably easier than avoiding all affine copies of the sequence, when scaling the sequence as well as translating it is allowed. For translations we have only one degree of freedom while for affine copies we have two. Still, some important ingredients of the method will be evident in the proof of Theorem \ref{th:translates} below. In \S\ref{1d} we will introduce the extra discretization in scaling space that will be required.

\begin{theorem}\label{th:translates}
Let $\AA=\Set{a_0=0 < a_1 < a_2 < \cdots} \subseteq \RR$ be a sequence with $a_n \to +\infty$,  and $p \in [0, 1)$. Then we can find a Lebesgue measurable set $E \subseteq \RR$ such that no translate of $\AA$
$$
x+\AA, \ \ x \in \RR,
$$
is contained in $E$, and such that for each $m \in \ZZ$ we have
$$
\Abs{E \cap [m, m+1]} \ge p.
$$
\end{theorem}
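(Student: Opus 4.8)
\textbf{Proof plan for Theorem \ref{th:translates}.}
The plan is to construct $E$ as a random set and show it has the desired properties with positive probability. The key point, which distinguishes this from the affine case, is that there is only \emph{one} parameter to quarrel with: a translate $x+\AA$ is determined by $x$, and since $\AA$ is discrete and unbounded it suffices to rule out translates with $x$ ranging over $\RR$ while controlling things one integer window at a time. Crucially, we will \emph{not} use any growth hypothesis on $a_n$ beyond $a_n\to+\infty$; the construction below works for an arbitrary such sequence because, unlike the affine problem, we have an unlimited number of ``fresh'' elements $a_n$ to spend.

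First I would fix $\varepsilon=1-p>0$ and pick a rapidly increasing sequence of indices $n_1<n_2<\cdots$ (to be chosen along the way) so that the real numbers $a_{n_1}<a_{n_2}<\cdots$ are very spread out; write $b_k=a_{n_k}$. The set $E$ will be defined independently on each unit window $I_m=[m,m+1]$, $m\in\ZZ$: inside $I_m$ we remove a random ``forbidden'' sub-window of length $\varepsilon$, namely we let $E\cap I_m = I_m\setminus(m+U_m+[0,\varepsilon))$ where $U_m$ are i.i.d.\ uniform on, say, $[0,1-\varepsilon]$ (with the obvious wrap-around convention, or simply uniform on $[0,1]$ with intervals taken mod $1$ inside the window). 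Then by construction $\Abs{E\cap I_m}=1-\varepsilon=p$ for every $m$, deterministically, so property (ii) holds with probability $1$ no matter how we choose the $n_k$. It remains to arrange property (i): almost surely no translate $x+\AA$ lies in $E$.

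The heart of the matter is the following. Suppose, toward a contradiction, that $x+\AA\subseteq E$ for some $x$. Look at the points $x+b_k=x+a_{n_k}$ for $k=1,2,\dots$. For each $k$, the point $x+b_k$ falls in some unit window $I_{m(k)}$, landing at position $\{x+b_k\}$ within it, and membership in $E$ forces $\{x+b_k\}\notin U_{m(k)}+[0,\varepsilon)\pmod 1$. If the $b_k$ are chosen so that \emph{no two} of $x+b_1,x+b_2,\dots,x+b_K$ lie in the same window for the relevant range of $x$ (this is where spreading out the $b_k$, i.e.\ taking gaps $b_{k+1}-b_k>1$, buys independence), then the events $\{x+b_k\in E\}$, $k=1,\dots,K$, involve distinct, hence independent, random variables $U_{m(k)}$; each has probability $1-\varepsilon$. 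The subtlety is that $x$ ranges over a continuum, so I would discretize: on the finitely many windows that $x+a_0,\dots,x+a_{n_K}$ can occupy, the combinatorial ``type'' of the configuration (which window each point lands in, and a fine net of positions) takes only finitely many values as $x$ ranges over a bounded set; a union bound over these $\le C(K)$ types, each failing with probability $\le (1-\varepsilon)^K$ plus a small net-error, shows that for $K$ large the probability that \emph{some} translate with $x$ in a fixed bounded set is contained in $E$ is less than, say, $2^{-|m|}$; summing over a countable cover of $\RR$ by bounded sets and letting $K\to\infty$ along the way gives probability $0$ for the bad event. A cleaner route, which the paper likely follows, is to avoid nets entirely: fix the window structure by only ever testing points $x+b_k$ whose \emph{integer part} we control, and observe that for $x+\AA\subseteq E$ the \emph{fractional parts} $\{x+b_k\}$ must avoid a fixed-length-$\varepsilon$ random arc in each of infinitely many independent windows, an event of probability $\le (1-\varepsilon)^K\to 0$ for every fixed $x$ in a countable dense set together with a continuity/monotonicity argument to pass from the dense set to all $x$.

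The main obstacle I anticipate is exactly this passage from a countable (or discretized) family of bad translates to the uncountable family indexed by all $x\in\RR$: one must ensure that the ``$x+\AA\subseteq E$'' condition is, in an appropriate sense, an open or closed condition so that checking it on a dense set suffices, or else pay a (harmless) logarithmic price in a net argument. Once that is handled, everything else is a routine Borel--Cantelli / union-bound computation, and, notably, no growth restriction on $a_n$ enters, since we may always pass to a subsequence $b_k=a_{n_k}$ spread out enough to guarantee one point per window and we have infinitely many of them.
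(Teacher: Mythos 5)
Your construction and your handling of the density condition are fine, and your observation that no growth hypothesis is needed (only $a_n\to+\infty$, after passing to a spread-out subsequence) matches the paper. But the step you yourself flag as ``the main obstacle'' --- passing from $\Prob{x+\AA\subseteq E}=0$ for each fixed $x$ to the statement that almost surely \emph{no} $x$ works --- is a genuine gap, and neither of your two proposed routes closes it as written. The union bound over combinatorial ``types'' is circular in your construction: the pieces of the partition of a bounded $x$-range are determined by the positions of the random forbidden arcs, so the event attached to a piece is not independent of where the piece is, and you cannot simply multiply the number of pieces by $(1-\varepsilon)^K$. (A net argument with a $\delta$-enlargement of $E$ can probably be pushed through, but it needs the wrap-around convention and care near window boundaries, and you have not supplied it.) The ``cleaner route'' via a countable dense set plus continuity/monotonicity fails outright: containment $x+\AA\subseteq E$ is neither an open nor a closed nor a monotone condition in $x$, and the bad set $B=\Set{x:\ x+\AA\subseteq E}$ can perfectly well be a nonempty, nowhere dense null set that misses any prescribed countable dense set. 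This is also emphatically \emph{not} what the paper does.

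The paper sidesteps the continuum problem entirely with a simpler idea that your plan is missing: it never proves that almost surely no translate lies in $E$. Instead it shows, by Fubini, that $\Mean{\Abs{B}}=\int\Mean{\One{x+\AA\subseteq E}}\,dx=0$, hence $\Abs{B}=0$ almost surely; it secures the density condition simultaneously for all $m$ via Chernoff (your deterministic-measure construction achieves this part even more directly); and then it \emph{modifies} the random set by removing the null set $B$ from $E$. Because $a_0=0\in\AA$, every translate $x+\AA$ contained in $E$ contains its own base point $x\in B$, so after deleting $B$ no translate of $\AA$ survives, while the measure in each unit interval is unchanged. Incorporating this remove-the-null-set step (or, alternatively, carrying out a genuinely rigorous net argument) is what your proposal still needs.
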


\begin{proof}
Let $q<1$ be defined by $1-q=\frac12(1-p)$ (or $q = \frac12(1+p)$).
Passing to a subsequence we can assume that $a_{n+1}-a_n \ge 1$ for all $n$. We construct a random set $E$ by breaking up each unit interval $[m, m+1]$, $m\in\ZZ$, into a number $N_m$ of equal intervals and keeping each of these subintervals with probability $q$, independently, into our set $E$. As $\Abs{m}$ increases the number $N_m$ will also have to increase, so let us take $N_m = \max\Set{K, \Abs{m}}$ say, where the large positive integer $K$ will be determined later.

\begin{figure}[h]
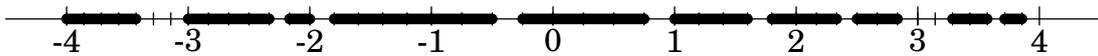

\begin{center}
\begin{asy}
unitsize(1.0cm);

int i, j, N=4;
int M[]={4, 5, 6, 7, 8};

srand(123);

draw((-N-0.5, 0) -- (N+0.5, 0));
for(i=-N; i<=N; ++i) {
 //dot((i, 0));
 draw((i,0.1)--(i,-0.05));
 label(string(i), (i,  0), S);
}
for(i=-N; i<N; ++i) {
 int m = i>=0?M[i]:M[-i-1];
 real h=1/m, d=0.05;

 for(j=0; j<m; ++j) {
  draw((i+j*h, d) -- (i+j*h, -d));
  if(unitrand()>0.3) { draw((i+j*h, 0)--(i+(j+1)*h, 0), linewidth(4bp)); }
 }
}
\end{asy}
\end{center}
\caption{How the random set $E$ looks like}
\end{figure}

Define now for $x \in \RR$ the random function
$$
\phi(x) = \One{x+\AA \subseteq E}.
$$
Since all points of $x+\AA$ are in different random intervals it follows, by independence, that $\Mean{\phi(x)} = \Prob{x + \AA \subseteq E} = 0$. Let the set of ``bad'' $x$ be
$$
B = \Set{x \in \RR:\ x+\AA \subseteq E}.
$$
We have
$$
\Mean{\Abs{B}} = \int \Mean{\phi(x)}\,dx = 0,
$$
hence $\Abs{B}$ is almost surely 0.

It remains to make sure that $\Abs{E \cap [m, m+1]} \ge p$ for all $m \in \ZZ$. Fix $m$ and let $X_1, \ldots, X_{N_m}$ be 0/1 random variables such that $X_i$ is 0 if we included the $i$-th subinterval of $[m, m+1]$ into the set $E$ and is 1 otherwise. In other words, $X_i$ denotes the absence of the $i$-the subinterval from the set $E$.
Clearly $\Mean{X_i} = 1-q$ and the random variable
$$
X = \sum_{i=1}^{N_m} X_i \text{ (the number of missing subintervals) }
$$
is a sum of independent indicator random variables with $\Mean{X} = (1-q) N_m$ and we can use the very versatile large deviation \textit{Chernoff inequality} (to be used repeatedly in \S\S \ref{Section3},\ref{2d} below)
\beql{chernoff-ineq}
\Prob{\Abs{X-\Mean{X}} \ge \epsilon \Mean{X}} \le 2 e^{-c_\epsilon \Mean{X}}
\eeq
(see \cite{chernoff1952measure,alon2016probabilistic}) with $\epsilon = 1$ to obtain
\begin{align}
\Prob{\Abs{E \cap [m, m+1]} < p} &=
  \Prob{X > (1-p)N_m} \nonumber\\
 & = \Prob{X-\Mean{X} > \Mean{X}}\nonumber\\
 & \le 2 \exp(-c_1  (1-q) \max\Set{K, \Abs{m}}).\label{chernoff-bound}
\end{align}
Define now the bad events $B_m = \Set{\Abs{E \cap [m, m+1]} < p}$ which we want not to hold, for all $m\in\ZZ$, and observe that the above inequality means that we can choose $K$ large enough to achieve
$$
\sum_{m \in \ZZ} \Prob{B_m} < \frac12.
$$
This means that with probability at least $1/2$ none of the bad events $B_m$ hold and, with the same probability, the set $B$ has measure 0.
We now amend our random set $E$ by removing from it the set $B$ (the set of first terms of those $x+\AA$ which are contained in $E$). Thus arises a set $E'$, which differs from $E$ by a set of measure $0$, and which contains no translate of $\AA$.
\end{proof}

\begin{remark}
It is not necessary to assume that $a_n \to +\infty$ in Theorem \ref{th:translates}. It suffices to assume that the set $\AA$ is infinite. If $\AA$ does not contain a sequence tending to infinity (for Theorem \ref{th:translates} to apply to it) then it will have a finite accumulation point, so a result of Komj\'ath \cite{komjath1983large} guarantees the existence of a set $\widetilde{E} \subseteq [0, 1]$, of measure arbitrarily close to $1$, which contains no translate of $\AA$.
Repeating $\widetilde{E}$ 1-periodically
$$
E = \bigcup_{n \in \ZZ} \widetilde{E}+n
$$
we obtain a set $E$ with the required properties. For a probabilistic proof of this result in the spirit of the present paper see \cite{kolountzakis1997infinite}.
\end{remark}

\begin{remark}
The Chernoff inequality \eqref{chernoff-ineq} is extremely useful when one needs to control a random variable $X$ (this means that one wants to ensure, with high probability, that $X$ is near its mean $\Mean{X}$) which is a sum of indicator, independent random variables. The key is that the mean $\Mean{X}$ cannot be very small, as it appears in the exponent in the right hand side of \eqref{chernoff-ineq}. Since one usually wants to do so \textit{simultanesouly} for a large number of random variables $X$, one key situation to keep in mind is the following: if the number of random variables to be controlled is polynomial in $N$ (a parameter) it is enough that their mean are at least a large multiple of $\log N$.
\end{remark}

With minor modifications of the proof we can get a progressively denser set $E$ avoiding all translates. We throw in the whole negative half line (as we could have done in Theorem \ref{MainThm} too).
\begin{theorem}\label{th:translates-denser}
Let $\AA=\Set{a_0=0 < a_1 < a_2 < \cdots} \subseteq \RR$ be a sequence with $a_n \to +\infty$. Then we can find a Lebesgue measurable set $E \subseteq \RR$ such that no translate of $\AA$
$$
x+\AA, \ \ x \in \RR,
$$
is contained in $E$, and such that
$$
(-\infty, 0] \subseteq E \ \text{ and }\  
\Abs{E \cap [m, m+1]} \to 1^{-} \text{ as } m \to +\infty.
$$
\end{theorem}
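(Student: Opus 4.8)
The plan is to adapt the proof of Theorem \ref{th:translates} with two modifications, each using an extra degree of freedom that was already implicitly available. First I would deal with the requirement $(-\infty,0]\subseteq E$ by simply declaring that the whole negative half-line belongs to $E$ from the outset, and running the random construction only on the intervals $[m,m+1]$ with $m\ge 0$. This does not create any new translates of $\AA$ inside $E$: since $\AA$ is unbounded above, any translate $x+\AA$ has points arbitrarily far to the right, so if $x+\AA\subseteq E$ then in particular infinitely many points of $x+\AA$ lie in the random part of $E$ (the region $[0,+\infty)$), and the same independence argument as before shows $\Mean{\phi(x)}=0$, hence the bad set $B$ still has measure zero almost surely. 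So adding the half-line is harmless for the avoidance property and trivially gives $(-\infty,0]\subseteq E$.

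Second, to get $\Abs{E\cap[m,m+1]}\to 1^-$ instead of a fixed bound $\ge p$, I would let the retention probability depend on $m$: on $[m,m+1]$ (for $m\ge 0$) subdivide into $N_m$ equal subintervals and keep each one independently with probability $q_m$, where $q_m\to 1$, say $q_m=1-\frac{1}{m+2}$. The number of missing subintervals $X^{(m)}$ is again a sum of $N_m$ independent indicators with mean $(1-q_m)N_m$, and Chernoff \eqref{chernoff-ineq} with $\epsilon=1$ gives
$$
\Prob{\Abs{E\cap[m,m+1]}<q_m - \delta_m} \le 2\exp(-c_1(1-q_m)N_m)
$$
for a suitable tolerance; the point is that we now need $N_m$ to grow fast enough that $(1-q_m)N_m\to\infty$ at a summable-exponential rate, e.g. $N_m=\max\{K,(m+2)\log^2(m+2)\}$ makes $(1-q_m)N_m\ge \log^2(m+2)$, so $\sum_{m\ge0}2\exp(-c_1\log^2(m+2))<\infty$, and by choosing $K$ large the sum is $<\tfrac12$. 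Then with probability $\ge \tfrac12$ every interval $[m,m+1]$ has $\Abs{E\cap[m,m+1]}\ge q_m-o(1)\to 1$, and simultaneously (same argument, unchanged) the bad set $B$ has measure zero; remove $B$ to obtain $E'$ containing no translate of $\AA$.

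The main obstacle, such as it is, is bookkeeping: one must choose the two sequences $q_m$ (tending to $1$) and $N_m$ (tending to infinity) so that, simultaneously, (i) $1-q_m\to 0$ gives the density conclusion, (ii) the deviation allowance in the Chernoff bound can also be taken $\to 0$ so that the final lower bound on $\Abs{E\cap[m,m+1]}$ still tends to $1$, and (iii) $(1-q_m)N_m$ grows fast enough (super-logarithmically, or simply $\gg \log m$) that $\sum_m \Prob{B_m}$ converges and can be made $<\tfrac12$ by the choice of $K$. All of this is a routine calibration of constants; no genuinely new idea beyond the proof of Theorem \ref{th:translates} is required. I would also remark that, exactly as in that proof, one could pass to a subsequence of $\AA$ so that $a_{n+1}-a_n\ge 1$, which is only needed to ensure that consecutive points of a translate fall in distinct subintervals — and in fact even that is unnecessary here since $N_m\to\infty$ forces the separation of the discretization to shrink to $0$, so eventually all points of any translate lie in distinct cells.
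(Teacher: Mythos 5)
The central difficulty in this theorem, compared to Theorem \ref{th:translates}, is precisely the step you wave away with ``the same independence argument as before shows $\Mean{\phi(x)}=0$.'' That argument does \emph{not} go through with your choice of retention probability. In Theorem \ref{th:translates} each point of the translate survives with a fixed probability $q<1$, so $\Prob{x+\AA\subseteq E}$ is a product of infinitely many factors $q$, which is automatically $0$. Here, by design, the retention probabilities $q_m\to 1$, so the product $\prod_n q_{\lfloor x+a_n\rfloor}$ is a product of numbers tending to $1$, and it vanishes if and only if $\sum_n (1-q_{\lfloor x+a_n\rfloor})=\infty$. With your choice $q_m=1-\frac{1}{m+2}$, this sum is essentially $\sum_n \frac{1}{a_n}$, which converges for perfectly legitimate sequences in the theorem's hypotheses (e.g.\ $a_n=2^n$). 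For such $\AA$ the probability of containing the translate $0+\AA$ is \emph{strictly positive}, and in fact the bad set $B$ has positive expected measure, so the argument collapses. The same problem undermines your justification that adjoining $(-\infty,0]$ is harmless: having ``infinitely many points in the random part'' is not enough; you need the corresponding series of failure probabilities to diverge.

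The genuinely new idea in the paper's proof is that the decay of $1-p(s)$ toward $0$ must be calibrated to the \emph{sparsity of $\AA$}, not chosen uniformly. Concretely, the paper lets $q(s)=1-p(s)$ equal $1/i$ on the block of integers containing the $i$-th occupied unit interval, so that by construction $q(a_i)\asymp 1/i$ and $\sum_{a\in\AA}q(a)=\infty$; then monotonicity of $q$ together with $a_{k+1}-a_k\ge1$ (after passing to a subsequence) shows $\sum_{a\in\AA}q(x+a)=\infty$ for \emph{every} $x$, because the shifted sum always dominates a tail of the divergent series. Your proposal is missing this calibration, which is the heart of the matter; the Chernoff/density bookkeeping is indeed routine and is handled the same way in both proofs. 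As a smaller point, your closing remark that one need not pass to a subsequence with gaps $\ge1$ because the cell sizes $1/N_m$ shrink is also not justified: nothing prevents the gaps $a_{n+1}-a_n$ from shrinking to $0$ faster than $1/N_m$, so consecutive points of a translate could still share a cell. Passing to a subsequence is cheap and should be kept.
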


\begin{proof}
We indicate the differences with the proof of Theorem \ref{th:translates} and omit some details.

Our random set $E$ now will be of the same type as in the proof of Theorem \ref{th:translates} but with the probability of including the small subintervals tending slowly to $1$ as we go out to $+\infty$ and with the negative half line contained in $E$ to begin with.

Let us view the probability of keeping an interval as a function $p(s)$ defined on the real line. In the proof of Theorem \ref{th:translates} this function was constant. Here it will be constant on all intervals of the form $[m, m+1]$, $m \in \ZZ$.

With $\phi(x) = \One{x+\AA \subseteq E}$ we need again to ensure that $\Mean{\phi(x)}=0$ for all $x\in \RR$. After assuming, as in the previous proof, that the points of $\AA$ differ by at least 1, we again have independence of all events $x+a \in E$ for $a \in \AA$ so that $\Mean{\phi(x)}=0$ becomes equivalent to
$$
\prod_{a \in \AA} p(x+a) = 0,
$$
which, writing $q(s) = 1-p(s)$, is equivalent to
\beql{divergent}
\sum_{a \in \AA} q(x+a) = +\infty.
\eeq
Let $0 = k_1 < k_2 < \cdots$ be those positive integers for which
$$
[k, k+1) \cap \AA \neq \emptyset.
$$
Define then $q(x)$ to be $1/i$ in the interval $[{k_i}, {k_{i+1}})$, $i=1, 2, \ldots$.
It follows easily that for all $x \in \RR$ we have \eqref{divergent}: since the function $q(\cdot)$ is decreasing we have $q(x+a_n) \ge q(a_n)$ if $x \le 0$ and if $x \ge 0$ we have $q(x+a_n) \ge q(a_{\Ceil{x}+n})$ since $a_{k+1}-a_k \ge 1$ for all $k\in\NN$.
In both cases the series \eqref{divergent} contains a tail of the series $\sum_{a \in \AA} q(a)$ which is divergent.

It remains to ensure that the random variables $\Abs{[m, m+1]\setminus E}$ tend to 0 with ${m}\to+\infty$. These random variables are $\frac{1}{N_m}$ times a sum of independent indicator random variables (one for each of the $N_m$ subintervals into which we break up $[m, m+1]$)  of mean $q(m) N_m$ so we can use the Chernoff bound \eqref{chernoff-ineq} to obtain
$$
\Prob{\Abs{[m, m+1]\setminus E} > 2q(m)} \le 2\exp(-c_1 q(m) N_m).
$$
To ensure that the sum, over all $m\in\ZZ$ of the left hand side is $< 1$ we can of course pick the integers $N_m$ to be very large, say $N_m = K \frac{1}{q(m)} \Abs{m}$, with a sufficiently large constant $K>0$.

\end{proof}

%%%%%%%%%%%%%%%%%%%%%%%%%%%%%%%%%%%%%%%%%%%%%%%%%%%%%%%%%%%%%%
%%%%%%%%%%%%%%%%%%%%%%%%%%%%%%%%%%%%%%%%%%%%%%%%%%%%%%%%%%%%%%
%%%%%%%%%%%%%%%%%%%%%%%%%%%%%%%%%%%%%%%%%%%%%%%%%%%%%%%%%%%%%%

\section{No affine copies for slowly increasing sequences} \label{1d}

In this section we prove Theorem \ref{MainThm} and explain why the proof also gives the more general Theorem \ref{th:trans}.

\begin{lemma}\label{Claim C}
Let $\mathbb{A}\in (A)$. For all $0<a<b$, $0\leq p<1$ and $\epsilon >0$, there is $N_0\in \mathbb{N}$, such that for all $N\geq N_0$, there is a set $ {E\subseteq [-N,N]}$ such that
\begin{enumerate}
\item[(i)]\label{(i)} for all $m\in \{-N,-N+1,..., N-1\}$, we have $\Abs{E\cap[m,m+1]}\geq p$, and
\item[(ii)]\label{(ii)} if the set $B$ consists of all $x\in [-N, N]$ for which  there is $t\in [a,b]$ such that 
\subitem (a) $(x+t\mathbb{A})\cap [-N, N] \subseteq E$ and 
\subitem (b) $\#((x+t\mathbb{A})\cap [-N, N])\geq A\left(\frac{N}{10b}\right)$, 
\end{enumerate}
then $\Abs{B}<\epsilon$. Here, $A(\cdot)$ is the counting function \eqref{counting} of the set $\AA$ and $A\left(\frac{N}{10b}\right) = \Abs{\AA \cap [0, N/(10b)]}$.
\end{lemma}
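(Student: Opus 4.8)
The plan is to construct the random set $E$ by subdividing each integer unit interval $[m,m+1]\subseteq[-N,N]$ into $M$ equal subintervals (the integer $M=M(N)$ to be chosen large, possibly growing with $N$) and independently keeping each subinterval in $E$ with probability $q$, where $1-q=\tfrac12(1-p)$, exactly as in the proof of Theorem \ref{th:translates}. Property (i) is then handled verbatim by the Chernoff estimate \eqref{chernoff-ineq}: for each of the $2N$ values of $m$ the number of missing subintervals is a sum of $M$ independent indicators with mean $(1-q)M$, so $\Prob{\Abs{E\cap[m,m+1]}<p}\le 2e^{-c_1(1-q)M}$, and a union bound over $m$ shows that, taking $M$ large enough (a constant multiple of $\log N$ suffices), with probability $>1/2$ property (i) holds simultaneously for all $m$.

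The real work is bounding $\Mean{\Abs{B}}$. The key difference from the translation case is that we now have two degrees of freedom ($x$ and $t$), so $\Prob{x+t\AA\subseteq E}$ is no longer zero for a single $(x,t)$ and we must discretize the scaling parameter. First I would fix a fine grid of scales $a=t_0<t_1<\cdots<t_L=b$ with $t_{j+1}-t_j$ small enough (on the order of $1/(NM)$, say) that if $x+t\AA$ has a point in $[-N,N]$ for some $t\in[t_j,t_{j+1}]$, then replacing $t$ by $t_j$ moves each of its at most $A(N/a)$ relevant points by less than the length $1/M$ of a subinterval; hence if $x+t\AA\subseteq E$ then a nearby translate of $x+t_j\AA$ already meets many of the kept subintervals. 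More precisely, for each fixed $t_j$ and each $x$, let $\psi_j(x)=\One{x+t_j\AA \text{ has at least }A(N/(10b))\text{ points in }[-N,N], \text{ all lying in }E}$ — actually it is cleaner to bound the measure of the set of $x$ for which at least $A(N/(10b))$ of the points of $x+t_j\AA\cap[-N,N]$ fall in $E$: since, after passing to the grid, consecutive points of $x+t_j\AA$ differ by $t_j a_{n+1}-t_j a_n\ge a\ge$ several subinterval lengths once $M$ is large, these events are \emph{independent} across $n$, so the probability that a given block of $A(N/(10b))$ of them all land in $E$ is at most $q^{A(N/(10b))}$. Integrating over $x$ in an interval of length $O(N)$ and summing over the $L=O(NM)$ grid points gives
$$
\Mean{\Abs{B}} \ \lesssim\ N\cdot (NM)\cdot q^{A(N/(10b))}.
$$
Now invoke the growth hypothesis \ref{growth}, in the form \eqref{growth1}: $A(N/(10b))/\log N\to+\infty$, so $q^{A(N/(10b))}=e^{-cA(N/(10b))}$ decays faster than any power of $N$, while $M$ can be taken polynomial in $N$ (indeed logarithmic suffices for (i)); hence the right-hand side tends to $0$ as $N\to\infty$. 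Therefore, for $N\ge N_0$, $\Mean{\Abs{B}}<\epsilon/2$, so by Markov $\Prob{\Abs{B}\ge\epsilon}<1/2$. Combining with the bound on property (i), both good events occur with positive probability, which furnishes the desired set $E$.

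The main obstacle, and the step to be executed carefully, is the discretization of the scale parameter: one must choose the grid spacing so that the number of grid points $L$ stays polynomially bounded in $N$ (so it is absorbed by the super-polynomial decay of $q^{A(N/(10b))}$) while still guaranteeing that passing from an arbitrary $t\in[a,b]$ to the nearest grid point $t_j$ preserves "$x+t\AA\subseteq E$" in the weakened form "$\ge A(N/(10b))$ points of $x+t_j\AA$ lie in $E$". Here the two lower bounds in (ii) are used crucially: condition (b) ensures there are enough points of the copy inside $[-N,N]$ to make the independence-based estimate $q^{A(N/(10b))}$ meaningful, and condition $a_{n+1}-a_n\ge1$ (from class (A)) together with $t\ge a$ ensures the requisite separation $\ge a$ of consecutive points, hence the independence of the keep/discard events along the copy after the subintervals are made short enough. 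For the generalization to Theorem \ref{th:trans}, the same argument applies with $t_j a_n$ replaced by $\phi(n,t_j)a_n$: the gap condition \eqref{phi-gap} supplies the separation $\ge C_1 a$ needed for independence, \eqref{phi-upper} bounds the range of points so that the number of relevant $n$ is still controlled by a counting function, and monotonicity of $\phi(n,\cdot)$ lets us discretize $t$ as before.
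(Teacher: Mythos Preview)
Your overall structure matches the paper's: a random $E$ built from independently kept subintervals, Chernoff for property (i), and for (ii) a union bound over finitely many scales, exploiting that the separation $t(a_{n+1}-a_n)\ge a$ makes the events $\{x+ta_n\in E\}$ independent once the subintervals are shorter than $a$. Your choice of a \emph{fixed} retention probability $q$, rather than the paper's $p_N\to 1$, is actually simpler and works just as well here, since $q^{A(N/(10b))}$ already decays super-polynomially in $N$.

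The gap is in the scale discretization, which you correctly flag as the crux but do not resolve. From ``$(x+t\mathbb{A})\cap[-N,N]\subseteq E$'' together with ``$|t-t_j|\,a_n<1/M$ for all relevant $n$'' you \emph{cannot} conclude that any point of $x+t_j\mathbb{A}$ lies in $E$: each $x+ta_n$ moves by a different, $n$-dependent amount, and a shift of up to $1/M$ is enough to push a point into an \emph{adjacent} subinterval that was discarded. No single translate in $x$ can undo all these unequal shifts simultaneously, so the inclusion $B\subseteq\bigcup_j\{x:\psi_j(x)=1\}$ that your bound presupposes is false as stated. The paper sidesteps this entirely: for each fixed $x$ it does not use a uniform grid in $t$, but takes the finite set $S(x)\subseteq[a,b]$ of those $t$ at which some $x+ta_n$ lands on a subinterval endpoint. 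Between two consecutive such $t$'s the combinatorial pattern (which subinterval contains which point) is constant, so the containment event is the \emph{same} event throughout; one checks a single $t$ per interval of constancy, and the probability bound $p_N^{A(N/(10b))}$ is exact, with no approximation. The number of crossings is at most $(\text{points in }[-N,N])\times(\text{endpoints})\le A(2N/a)\cdot 2Nk_N$, still polynomial in $N$. (Your uniform grid \emph{can} be repaired by thickening the target: bound instead the event that each $x+t_ja_n$ lies in a subinterval one of whose three neighbors is kept; this has probability $1-(1-q)^3<1$ per point and the events remain independent once $3/M<a$. But that is not what you wrote, and the crossing-time argument is cleaner.)
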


Let us first show how one derives Theorem \ref{MainThm}  from Lemma \ref{Claim C}. We give the proof of Theorem \ref{MainThm} in two steps: the first one verifies the result for a restricted scale, that is, for scales in a compact interval,  and the second one concludes for all positive scales, by writing the whole scaling interval  $(0,+\infty)$ as a countable union of intervals of the above type.

\textit{\textbf{Step 1}. For all $0<a<b$ and for each $0\leq p<1$,  there exists a set $E\subseteq \mathbb{R}$,   such that $\Abs{E\cap[m, m+1]}\geq p$ for all $m\in \mathbb{Z}$, but $E$ does not contain any affine copies of $\mathbb{A}$ with scale in $[a,b]$.}

Consider $ 0\leq p<1$ and a positive increasing sequence $\{p_n\}$, $n=1, 2, ...$ such that  $p_n \rightarrow 1^{-}$  and, moreover, 
\begin{equation}\label{measp}
	\sum\limits_{n=0}^{\infty}(1-p_n)<1-p.
\end{equation} 
Take also any positive sequence $\epsilon_n \rightarrow 0$. According to Lemma \ref{Claim C}, for $0<a<b$,  we can choose an increasing sequence of natural numbers $N_n = N_n(p_n, \epsilon_n, a, b) \to \infty$, for which there exist sets $E_n \subseteq[-N_n, N_n]$ with the following properties:  
\begin{itemize}
\item[(i)] for all $m=-N_n, ..., N_n-1$, we have $\Abs{E_n\cap[m,m+1]}\geq p_n$, 
\item[(ii)]  if 
\[\AA_n(x, t)=(x+t\mathbb{A})\cap[-N_n, N_n]\] 
and 
$$
B_n = \{x\in [-N_n, N_n]: \; \exists t\in [a,b] \text{ s.t. } \AA_n(x,t)\subseteq E_n
$$
$$
\text{ and } \# \AA_n(x,t)\geq A\left(\frac{N_n}{10b}\right)\},
$$
then $\Abs{B_n}<\epsilon_n$.
\end{itemize} 

Now take 
\[
\widetilde{E_n}=\left( -\infty, -N_n \right]\cup E_n \cup \left[N_n, +\infty \right)
\]
and
\[
E= \bigcap\limits_{n=1}^{\infty}\widetilde{E_n}.
\]

\begin{figure}[h]
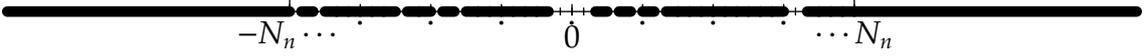

\begin{center}
\begin{asy}
size(9cm,0);
//unitsize(1.6cm);

int i, j, N=4;
//int M[]={4, 5, 6, 7, 8};
int m = 6;
real h=1/m, d=0.05;

srand(1234);

draw((-N-0.5, 0) -- (N+0.5, 0));
for(i=-N; i<=N; ++i) {
 //dot((i, 0));
 draw((i,0.1)--(i,-0.05));
 label(abs(i)<N?".":(i>0?"$\cdots N_n$":"$-N_n \cdots$"), (i,  0), S);
 if(abs(i)==N) draw((i, 0.2)--(i,-0.05));
}
label("$0$", (0, -d), S);

for(i=-N; i<N; ++i) {
 for(j=0; j<m; ++j) {
  draw((i+j*h, d) -- (i+j*h, -d));
  if(unitrand()>0.3) { draw((i+j*h, 0)--(i+(j+1)*h, 0), linewidth(4bp)); }
 }
}

draw((N, 0)--(2*N, 0), linewidth(4bp));
draw((-N, 0)--(-2*N, 0), linewidth(4bp));
\end{asy}
\end{center}
\caption{The set $\widetilde{E_n}$.}
\end{figure}

Then, since $\Abs{\widetilde{E_n}\cap[m, m+1]}\geq p_n$ for all $m\in \mathbb{Z}$, we get from (\ref{measp}) that the set $E$ has measure at least $p$ at every unit interval with integer endpoints. 
Also, if  there exist $x$, $t$ such that $x+t\mathbb{A}\subseteq E$, then $x+t\mathbb{A}$ is also contained in each $\widetilde{E_n}$. Having fixed $x$ and $t$ we can then find $n_0$ large enough such that for all $n\geq n_0$, we have $\# ((x+t\mathbb{A})\cap[-N_n,N_n])\geq A\left(\frac{N_n}{10b}\right)$. This implies that for every $n\geq n_0$, $x\in B_n$. It follows that for every $n\geq n_0$, $\Abs{B_n}<\epsilon_n$. Since $\epsilon_n \rightarrow 0$, setting
$$
B=\{x: \; \exists t\in [a,b] \text{ s.t. } x+t\mathbb{A} \subseteq E\},
$$
we get $\Abs{B}=0$. The null set of ``bad" translates $B$ is contained in $E$ (since we assumed that $0 \in \AA$), thus removing it from $E$ results in a set $E'$, which still has measure $\Abs{E'\cap[m, m+1]}\geq p$ for all $m\in \mathbb{Z}$, but contains no affine copy of $\mathbb{A}$ with scale in $[a,b]$.  

\textit{\textbf{Step 2}. Completion of the proof of Theorem \ref{MainThm}.}

Take a positive sequence $p_n' \in [0, 1)$,  {$n\in \mathbb{Z}$,} such that 
\begin{equation}\label{sum_pn}
	\sum\limits_{n\in \mathbb{Z}}(1-p_n')< {1-p}.
\end{equation}
Consider the intervals  $[a_n, b_n]=[2^{n-1}, 2^n]$,  $n\in \mathbb{Z}$. Then, according to Step 1, for each $p_n'$, there exists a set $E_n$ such that $\Abs{E_n\cap[m, m+1]}\geq p_n'$, for all $m\in \mathbb{Z}$, but for all $x\in \mathbb{R}$ and for all $t\in [a_n,b_n]$, the set $x+t\mathbb{A}$ is not contained in $E_n$.

Take
$$
E=\bigcap_{n\in \mathbb{Z}}E_n.
$$
Assume that for some $x\in \mathbb{R}$ and some $t>0$, $x+t\mathbb{A} \subseteq E$. Then, $x+t\mathbb{A} \subseteq E_n$, for all $n\in \mathbb{Z}$. However, since there is $n_0\in  {\mathbb{Z}}$ such that $t\in [2^{n_0-1}, 2^{n_0}]$, the inclusion $x+t\mathbb{A} \subseteq E_{n_0}$ cannot be true. Thus, $E$ does not contain any affine copy of $\mathbb{A}$ with positive scale. Finally, due to (\ref{sum_pn}) we have $\Abs{[m, m+1]\setminus E}<1-p$, or $\Abs{E\cap[m, m+1]} \geq p$. 		

%%%%%%%%%%%%%%%%%%%%%%%%%%%%%%%%%%%%%%%%%%%%%%%%%%%%
%%%%%%%%%%%%%%%%%%%%%%%%%%%%%%%%%%%%%%%%%%%%%%%%%%%%
%%%%%%%%%%%%%%%%%%%%%%%%%%%%%%%%%%%%%%%%%%%%%%%%%%%%

\subsection{Proof of Lemma \ref{Claim C}} \label{Section3}

Fix the scale $t\in [a,b]$ and let $ 0\leq p<1$. 
Consider the positive sequence given by
\begin{equation}\label{pN}
	p_{N}=1-{\sqrt{\frac{\log\left( \frac{N}{10b}\right)}{A\left( \frac{N}{10b}\right)}}}.
\end{equation}
From \eqref{growth1} this implies 
%Then, since (\ref{growth}) implies
%that
%\begin{equation}\label{growthinv}
%\lim_{x\rightarrow +\infty} \frac{f^{-1}(x)}{\log x}=+\infty,
%\end{equation}  we have
$p_N\rightarrow 1^{-}$.

Partition $[-N,N]$ into unit intervals $[m, m+1]$, $m=-N, -N+1, ..., N-1$. Divide each $[m, m+1]$ further, into $k_N$ equal subintervals
\[
I_{i,m}=m+\left[ \frac{i-1}{k_N}, \frac{i}{k_N}\right],\quad i=1,..., k_{N}, 
\]
where  
\begin{equation}\label{k(N)'}
k_{N}=\left\lceil \frac{10}{a} \right\rceil \frac{N}{1-p_N}.
\end{equation}
Notice that $k_N/N \to +\infty$.

Construct a random set $E=E_N$ as follows: keep each $I_{i,m}$ in $E$ independently of the other intervals and with probability $p_N$ as in (\ref{pN}).
Then, $\pr(x\in E)=p_{N}$ for each $x\in [-N,N]$. 

\begin{figure}[h]
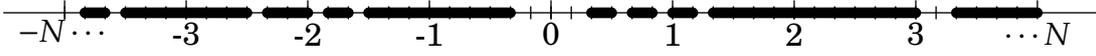

\begin{center}
\begin{asy}
unitsize(1.0cm);

int i, j, N=4;
//int M[]={4, 5, 6, 7, 8};

srand(1234);

draw((-N-0.5, 0) -- (N+0.5, 0));
for(i=-N; i<=N; ++i) {
 //dot((i, 0));
 draw((i,0.1)--(i,-0.05));
 label(abs(i)<N?string(i):(i>0?"$\cdots N$":"$-N \cdots$"), (i,  0), S);
}
for(i=-N; i<N; ++i) {
 int m = 6;
 real h=1/m, d=0.05;

 for(j=0; j<m; ++j) {
  draw((i+j*h, d) -- (i+j*h, -d));
  if(unitrand()>0.3) { draw((i+j*h, 0)--(i+(j+1)*h, 0), linewidth(4bp)); }
 }
}
\end{asy}
\end{center}
\caption{The random set $E$.}
\end{figure}

Let $M_{N}(x,t)$ be the number of elements of $(x+t\mathbb{A})\cap [-N,N]$ and observe that
\beql{mn}
M_N(x, t) \le A(2N/a),\ \ \text{ for } x \in [-N, N].
\eeq
For a given set $E\subseteq[-N, N]$, consider the set of ``bad" translates
$$
B=\Bigl\{ x \in [-N, N]:\; \exists t\in [a,b] \text{ s.t. }\\ (x+t\mathbb{A})\cap[-N,N]\subseteq E$$
$$
  \text{ and } M_N(x,t)\geq A\left(\frac{N}{10b}\right)\Bigr\}.
$$

We first deal with the measure of $B$.  We have
\begin{align}
	\mathbb{E}\Abs{B} &= \mathbb{E}\int_{-N}^{N}\mathbbm{1}_{B}(x)dx \notag\\
	&=\int_{-N}^{N}\mathbb{P}\Bigl[\exists t\in [a,b]: \; (x+t\mathbb{A})\cap[-N,N]\subseteq E\\
      &\text{ \ \ \ \ \ \ \ \ \ and } M_N(x,t)\geq A\left(  \frac{N}{10b} \right)\Bigr]\,dx.\label{measB'}
\end{align}
In what follows, we estimate from above the probability in (\ref{measB'}), uniformly in $x\in [-N, N]$.

Fix $x\in [-N,N]$.
To check whether there exists $t\in [a,b]$ such that $(x+t\mathbb{A})\cap[-N,N]\subseteq E$, it is sufficient to check whether such a $t$ exists in a finite set 
\beql{crossings}
S=S(x)=\{t_1, t_2, ..., t_u\}\subseteq [a,b].
\eeq
\begin{figure}[h]
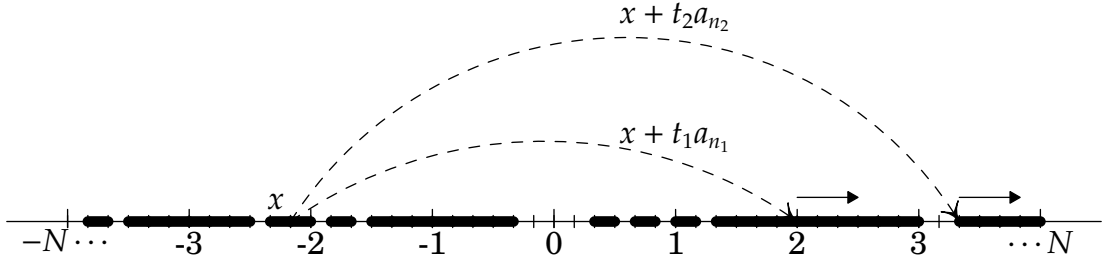

\begin{center}
\begin{asy}
unitsize(1.0cm);

int i, j, NN=4;
int m = 6;
real x, h=1/m, d=0.05;

srand(1234);

draw((-NN-0.5, 0) -- (NN+0.5, 0));
for(i=-NN; i<=NN; ++i) {
 //dot((i, 0));
 draw((i,0.1)--(i,-0.05));
 label(abs(i)<NN?string(i):(i>0?"$\cdots N$":"$-N \cdots$"), (i,  0), S);
}
for(i=-NN; i<NN; ++i) {
 for(j=0; j<m; ++j) {
  draw((i+j*h, d) -- (i+j*h, -d));
  if(unitrand()>0.3) { draw((i+j*h, 0)--(i+(j+1)*h, 0), linewidth(4bp)); }
 }
}

x = -2-h;
draw((x, -d)--(x, d));
label((x, d), "$x$", NW);

draw((x,0)..(1,0.5)..(2,0), dashed, ArcArrow(HookHead));
label((1, 0.5), "$x+t_1a_{n_1}$", N);
draw((2,4*d)--(2.5,4*d), ArcArrow);

draw((x,0)..(1,1.5)..(3+2*h,0), dashed, ArcArrow(HookHead));
label((1, 1.5), "$x+t_2a_{n_2}$", N);
draw((3+2*h,4*d)--(3.5+2*h,4*d), ArcArrow);
\end{asy}
\end{center}
\caption{As $x$ is held fixed and $t$ grows the points $x+t a_n$ cross over interval endpoints creating events that need to be checked.}
\end{figure}
Write $\alpha_0'<\alpha_1'<...<\alpha_{M_{N}(x, t)-1}'$ for the elements of $(x+t\mathbb{A})\cap[-N,N]$. Then, the set $S$ consists exactly of those $t\in [a,b]$ for which some $\alpha_j'=x+ta_j$, $j= 0, ..., M_{N}(x, t)-1$, is in the set $m+\left\{0, \frac{1}{k_N}, \frac{2}{k_N}, ..., \frac{k_N-1}{k_N}, 1 \right\}$, for some $m\in \{-N, -N+1, ..., N-1\}$.  Each of the points $\alpha_j'=x+ta_j$ traverses, as $t$ moves from $a$ to $b$, and as long as the point $\alpha_j'$ remains in $[-N, N]$, an interval of length at most $2N$, therefore it meets at most $2N k_N$ interval endpoints of the intervals $I_{i, m}$. Altogether, we have
\begin{equation}\label{numbert's2}
	u\leq 2N k_N \sup_{a \le t \le b} M_N(x, t) \leq c(a) N^2 (1-p_N)^{-1} A\left(  \frac{2N}{a} \right),
\end{equation}
where for the last inequality, we used \eqref{k(N)'} and \eqref{mn}.

Since $k_N \to +\infty$, we can take $N$ large enough, say $N\geq N_0$, so that $k_N>1/a$, for every $N\geq N_0$. Then, the length of each $I_{i,m}$ is small enough, $\le a$, to ensure that, for each $t\in [a,b]$, the points $\alpha_j'$, $j= 0, ..., M_{N}-1$, all belong to different intervals $I_{i,m}$. 
Therefore, for any fixed $x$ and $t$, 
\begin{align}
& \Prob{  (x+t\mathbb{A})\cap [-N, N]\subseteq E \text{ and } M_N(x,t)\geq A\left(  \frac{N}{10b} \right)} \nonumber\\
 \ \  &\ \ \le \Prob{  (x+t\mathbb{A})\cap [-N, N]\subseteq E \ \mid\   M_N(x,t)\geq A\left(  \frac{N}{10b} \right)} \nonumber\\
\ \  & \ \ \leq p_{N}^{A\left(  \frac{N}{10b} \right)}. \label{exponent}
\end{align}
Thus, using the bound (\ref{numbert's2}), 
\begin{align*}
&\Prob{\exists t\in S: \;  (x+t\mathbb{A})\cap [-N, N])\subseteq E}\\
&  \ \ \ \leq c(a)N^2 (1-p_N)^{-1}A\left(  \frac{2N}{a} \right)p_{N}^{A\left(  \frac{N}{10b} \right)}.
\end{align*}
Thus, (\ref{measB'}) yields
\[
\mathbb{E}\Abs{B}\leq 2 c(a)N^3 (1-p_N)^{-1}A\left(  \frac{2N}{a} \right)p_{N}^{A\left(  \frac{N}{10b} \right)}.
\]
We want to have  $$N^3(1-p_N)^{-1}A\left(  \frac{2N}{a} \right)p_{N}^{A\left(  \frac{N}{10b} \right)}\rightarrow 0,$$
while $p_{N}\rightarrow 1^{-}$, as $N\rightarrow \infty$. Since $A(\cdot)$ grows at most linearly at infinity, it suffices to show that
\begin{align}\label{limit}
	A\left(\frac{N}{10b}\right)&\log p_N \left( 4\frac{\log N}{A\left(\frac{N}{10b}\right)\log p_N}-\frac{\log(1-p_N)}{A\left(\frac{N}{10b}\right)\log p_N}+ 1 \right)\\
 &\rightarrow -\infty. \nonumber
\end{align}
To show (\ref{limit}), observe first that since $\lim_{x\rightarrow +\infty} x\log \left(1-x^{-1/2}\right)=-\infty,$
we have 	
\begin{equation}\label{limdom}
	\frac{A\left(\frac{N}{10b}\right)\log p_N}{\log N}\rightarrow -\infty,
\end{equation}
due to (\ref{pN}). Therefore, we also have $A\left(\frac{N}{10b}\right)\log p_N \rightarrow -\infty$. Finally, by \eqref{pN} and \eqref{limdom} we get 
$$\frac{\log(1-p_N)}{A\left(  \frac{N}{10b}\right)\log p_N }=-\frac{1}{2} \frac{\log A\left(\frac{N}{10b}\right)}{A\left(\frac{N}{10b}\right)\log p_N}\left\{ 1-\frac{\log \log\frac{N}{10b}}{\log A\left( \frac{N}{10b}\right)}\right\}\rightarrow 0.$$
In other words, we have shown that for every $\epsilon >0$, there is $N_1\geq N_0$ such that for all $N\geq N_1$, $\mathbb{E}\Abs{B}<\epsilon/2$, which implies that 
\begin{equation}\label{property1'}
	\pr(\Abs{B}\geq \epsilon)<1/2, \quad \forall N\geq N_1.
\end{equation}

We now turn to the measure of $E$ in every unit interval with integer endpoints. Fix $m\in [-N,N]$. Let $X_1^m, X_2^m, ..., X_{k_N}^m$ be independent indicator random variables, with $X_i^m=1$ if and only if $I_{i,m}\subseteq E$. Let $Y_i^m=1-X_i^m$ and denote by $X^m=\sum_{i=1}^{k_N}X_i^m$, $Y^m=\sum_{i=1}^{k_N}Y_i^m$ their sums. Then, $\mathbb{E}Y^m=(1-p_{N})k_{N}$. Notice also that the total measure kept in $[m,m+1]\cap E$ is equal to $X^{m}/k_N$.

For any $\delta>0$ we define the ``bad" events 
\[
A_m=\{ |Y^m-\mathbb{E}Y^m|>\delta\, \mathbb{E}Y^m  \}, \quad m= -N, -N+1, ..., N-1.
\]
To control $\Prob{A_m}$, we use Chernoff's inequality, \cite{alon2016probabilistic,chernoff1952measure}: for all $\delta >0$, 
\[
\Prob{A_m}\leq 2e^{-c_{\delta}\mathbb{E}Y^m}, 
\]
where $c_{\delta}=\min\Set{(1+\delta)\log(1+\delta)-\delta\log\delta,\, \delta^{2}/2}$.
Take $\delta=1/2$. It follows that 
\begin{align*}
& \Prob{|Y^m-(1-p_{N})k_{N}|>\frac{1}{2}(1-p_{N})k_{N}}\\
&\ \ \ \ \ \leq 2\exp\left(-{\frac{1}{2}(1-p_{N})k_{N}}\right).
\end{align*}

Thus, the probability that there is some $[m, m+1]\subseteq [-N,N]$, such that $A_m$ holds, is at most 
\[
4N\exp\left(-{\frac{1}{2}(1-p_{N})k_{N}}\right)
\]
and the right hand side tends to zero as $N\rightarrow +\infty$, by our choice of $k_{N}$ in (\ref{k(N)'}).
Thus, there is $N_2\geq N_1$ such that  
\begin{equation}\label{property2'}
	\Prob{\exists m\in \{-N, -N+1, ..., N-1\}: \, A_m \text{ holds}} < \frac12, 
\end{equation}
for all $N\geq N_2$.
Then, (\ref{property1'}) and (\ref{property2'}) imply the existence of a set $E\subseteq \mathbb{R}$ such that, on the one hand, it satisfies 
\[
\Abs{B}<\epsilon
\]
and on the other hand, 
\[
X^m-p_{N}k_{N}\geq -\frac{1}{2}(1-p_{N})k_{N}, 
\]
for all $m=-N, -N+1, ..., N-1$, for all $N\geq N_2$. Thus the measure of $E$ in each unit interval $[m, m+1]$,  is at least $p_N-\frac{1}{2}(1-p_N) \rightarrow 1$, as $p_N\rightarrow 1^{-}$. In other words, for all $0\leq p<1$, there is $N_3\geq N_2$ such that for all $N\geq N_3$, we have $\Abs{E\cap[m, m+1]}\geq p$. The proof of Lemma \ref{Claim C} is now complete.

\begin{remark}\label{rem:trans}
Let us indicate here why the proof of Theorem \ref{MainThm} just completed also applies to Theorem \ref{th:trans} without any essential changes. First of all, the implication from Lemma \ref{Claim C} to Theorem \ref{th:infinite} (finite to infinite) remains true almost verbatim. So it suffices to ensure that Lemma \ref{Claim C} is true in this case. The main ingredients of the proof of Lemma \ref{Claim C} are the following. Having fixed $x$ and varying $t$ we have to make sure that the following conditions hold.
\begin{enumerate}[label=C.\arabic*]
\item All points of the the $(x, t)$-copy of the set remain well separated, so that independence applies and we can multiply the probabilities that they belong to our random set. This is ensured by  \eqref{phi-gap}.
\item The number of points in the $(x, t)$-copy of the set in the interval $[-N, N]$ has to be large as this is the exponent in the upper bound \eqref{exponent}. Condition \eqref{phi-upper} guarantees this.
\item The number of events that need to be checked so that we are certain that for all $t$ no $(x, t)$-copy is contained in our random set is small. This is the number $u$ in \eqref{crossings}. What we are doing in the proof is to count how many times each of the points of our set (as $x$ is held fixed and $t$ increases from $a$ to $b$) crosses over an interval boundary. Since the $\phi(n, t)$ are assumed increasing in $t$ this remains as before.
\end{enumerate}
It should be clear that the conditions imposed on the scaling functions $\phi(n, t)$ in Theorem \ref{th:trans} are far from optimal. They are rather indicative of what can be accomplished with the method and it is clear that the method could work under different sorts of conditions.
\end{remark}

\section{The problem in higher dimension}\label{2d}

We will derive Theorem \ref{th:infinite} as a consequence of the more finitary theorem below.
\begin{theorem}\label{th:finite}
Let $d_1, d \ge 1$, $\beta, \zeta >0$, $p \in (0, 1)$.
Let also $\alpha(N)$ be a function satisfying $\Ds \frac{\alpha(N)}{\log N} \to +\infty$.

Then if $N$ is sufficiently large and $P \subseteq \RR^{d_1}$ is a point set with at most $N^\zeta$ points there is a set $E_N \subseteq [-N, N]^d$ such that
\begin{enumerate}
\item $\Abs{E_N \cap \left(m+[0, 1]^d\right)} \ge p$ for all $m = (m_1, \ldots, m_d) \in \ZZ^d$, with $-N \le m_j < N$,
\item\label{contain}
For any linear map $T:\RR^{d_1} \to \RR^d$ if
\beql{sep}
T(P) \cap [-N, N]^d
\eeq
contains at least $\alpha(N)$ points with separation$\ge N^{-\beta}$ then
\beql{containment}
\left( T(P) \cap [-N, N]^d \right) \subsetneq E_N.
\eeq
\end{enumerate}
\end{theorem}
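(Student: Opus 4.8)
The plan is to adapt the random construction behind Lemma~\ref{Claim C} to the present setting; the one genuinely new ingredient will be a discretization, via hyperplane arrangements, of the continuous family of linear maps $T$. I would fix $q$ with $p<q<1$, choose an integer $k=k(N)$ that is a power of $N$ large enough that subcubes of side $1/k$ have diameter $<N^{-\beta}$ (e.g.\ $k=\lceil 2\sqrt d\,N^{\beta}\rceil$ — there is great latitude here), subdivide $[-N,N]^d$ into half-open axis-parallel subcubes of side $1/k$ that tile $[-N,N]^d$ so every point lies in a unique subcube, and form a random set $E_N$ by keeping each subcube independently with probability $q$ and taking $E_N$ to be the union of the kept subcubes.

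For the measure condition: a fixed unit cube $m+[0,1]^d\subseteq[-N,N]^d$ contains $k^d$ subcubes, so $\Abs{E_N\cap(m+[0,1]^d)}$ is $k^{-d}$ times a $\mathrm{Binomial}(k^d,q)$ random variable; since $q>p$, Chernoff's inequality \eqref{chernoff-ineq} bounds $\Prob{\Abs{E_N\cap(m+[0,1]^d)}<p}$ by $2e^{-c\,k^d}$ with $c=c(p,q)>0$, and a union bound over the $\le (2N)^d$ such cubes makes the probability that condition~(1) fails at most $(2N)^d\cdot 2e^{-ck^d}$, which $\to 0$ as $N\to\infty$ because $k^d$ is a positive power of $N$ while the prefactor is only polynomial.

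For the containment condition — the heart of the argument — I would, for each linear $T\colon\RR^{d_1}\to\RR^d$, record its \emph{footprint} $F(T)$, the set of subcubes meeting $T(P)\cap[-N,N]^d$; then $\Prob{F(T)\subseteq E_N}=q^{\Abs{F(T)}}$ by independence, and whenever $T(P)\cap[-N,N]^d$ contains $\alpha(N)$ points pairwise at distance $\ge N^{-\beta}$ these fall in pairwise distinct subcubes (the subcube diameter being $<N^{-\beta}$), so $\Abs{F(T)}\ge\alpha(N)$ and $\Prob{F(T)\subseteq E_N}\le q^{\alpha(N)}$ for every such $T$. The crucial claim is that only polynomially many footprints occur: viewing the space of linear maps as $\RR^{D}$ with $D=d\,d_1$ (the matrix entries), for each $p\in P\setminus\Set{0}$ and each grid hyperplane $\{x_j=c\}$ the locus $\Set{T:\ Tp\in\{x_j=c\}}$ is an affine hyperplane in $\RR^D$, and there are at most $\Abs{P}\cdot O(dNk)=\mathrm{poly}(N)$ of them; on each face of the resulting arrangement no point $Tp$ crosses a grid hyperplane, so $F(T)$ is constant there, and since an arrangement of $H$ hyperplanes in $\RR^D$ has $O(H^{D})$ faces of all dimensions, the number of distinct footprints is $\mathrm{poly}(N)$. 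A union bound over all footprints coming from maps $T$ satisfying the separation hypothesis then gives
\[
\Prob{\exists\,T\ \text{with the separation property and}\ T(P)\cap[-N,N]^d\subseteq E_N}\ \le\ \mathrm{poly}(N)\cdot q^{\alpha(N)},
\]
which tends to $0$ since $q<1$ and $\alpha(N)/\log N\to+\infty$, so $q^{\alpha(N)}$ beats any power of $N$.

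Finally, for $N$ large both failure probabilities above are $<1/2$, hence some deterministic $E_N$ simultaneously satisfies $\Abs{E_N\cap(m+[0,1]^d)}\ge p$ for all admissible $m$ and has $F(T)\not\subseteq E_N$ — equivalently, since the subcubes are half-open so that ``$Tp\in E_N$'' means ``the subcube of $Tp$ is kept'', $T(P)\cap[-N,N]^d\not\subseteq E_N$ — for every $T$ with the stated separation, which is \eqref{containment}. The hard part will be the polynomial bound on the number of footprints: unlike Lemma~\ref{Claim C}, where with $x$ held fixed only a one-parameter family of scalings $t$ had to be examined (by counting how often each point crosses an interval endpoint), here one must control the entire $(d\,d_1)$-dimensional family of linear maps at once, and the point is to recognize that only the combinatorial footprint is relevant and that the hyperplane-arrangement cell count keeps the number of footprints polynomial — small enough for the union bound against $q^{\alpha(N)}$ to close. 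Boundary and tie-breaking issues (points of $T(P)$ landing on grid hyperplanes) are a routine technicality absorbed into the half-open convention.
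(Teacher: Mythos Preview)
Your proposal is correct and follows essentially the same approach as the paper: a random union of small subcubes kept independently with probability $q\in(p,1)$, Chernoff for the measure condition, and a discretization of the space of linear maps $T\in\RR^{d\cdot d_1}$ by the hyperplane arrangement $\{T:(Tp)_j=c\}$ to reduce to polynomially many combinatorial types, followed by a union bound against $q^{\alpha(N)}$. The only cosmetic differences are that the paper takes subcubes of side $N^{-\gamma}$ with $\gamma>\beta$ rather than your explicit $k\approx\sqrt{d}\,N^{\beta}$, and speaks of one representative $T_j$ per cell rather than of ``footprints'', but these are the same idea.
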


\begin{proof}
Let $\gamma > \beta$ and split the cube $[-N, N]^d$ with a $N^{-\gamma}\times\cdots\times N^{-\gamma}$-spaced grid of $O(dN^{1+\gamma})$ hyperplanes perpendicular to the $d$ coordinate axes. Define the random set $E$ to contain each of the $N^{-\gamma}\times\cdots\times N^{-\gamma}$-sized cubes independently with probability $p' \in (p, 1)$. We show that with positive probability one can take $E_N = E$.

The first property of $E$ is a simple consequence of Chernoff bounds and we can assume it holds with probability $> \frac12$ working as in the proof of Theorem \ref{MainThm}.

Let $T = (T_{i, j})$ be a linear map $\RR^{d_1} \to \RR^d$. This depends on $d \cdot d_1$ real variables $T_{i, j}$, so we view $T$ as an element of $\RR^{d\cdot d_1}$. Instead of checking condition \eqref{contain} for all $T \in \RR^{d\cdot d_1}$ we first show that there is a small number (polynomial in $N$) of $T$'s that need to be checked.

Indeed, the set of $N^{-\gamma}\times\cdots\times N^{-\gamma}$-sized cubes that contain $T(P)$ does not change when $T$ varies except when one or more of the points in $T(P)$ cross a dividing hyperplane of those that subdivide $[-N, N]^d$. Let $H$ be one of those $O(d N^{1+\gamma})$ hyperplanes and fix an arbitrary point $h \in H$. Let also $u$ be a unit vector orthogonal to $H$. For a point $x \in \RR^d$ to belong to $H$ it must satisfy the linear equation
$$
E(H, x):\ \ u \cdot x = u \cdot h.
$$
Let $q \in P$. For the point $T(q)$ to belong to $H$ we must have
$$
E(H, T(q)):\ \ u \cdot T(q) = u \cdot h,
$$
which is a linear equation in $T \in \RR^{d\cdot d_1}$. Taking all such equations in $T$, over all dividing hyperplanes $H$ and all $q \in P$ we obtain a subdivision of $\RR^{d\cdot d_1}$ by
$$
n = O(d \cdot N^{1+\gamma} \cdot \Abs{P})
$$
hyperplanes. These $n$ hyperplanes subdivide $\RR^{d\cdot d_1}$ into $m = O(n^{d \cdot d_1})$ connected regions (this is easily proved by induction on the dimension, or see \cite{buck1943partition}). For any two points $T_1, T_2$ in the same region condition \eqref{containment} is either true for both or false for both since we can move continuously from $T_1$ to $T_2$ without leaving the region and, therefore, without any of the point $T(q)$ touching any of the dividing hyperplanes $H$.

\begin{figure}[h]
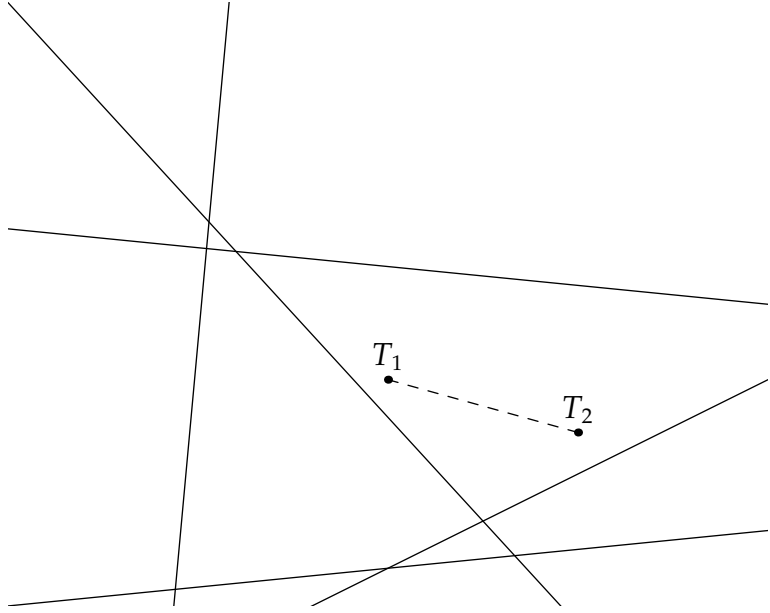

\begin{center}
\begin{asy}
size(9cm,0);
//unitsize(1.6cm);

draw( (0, 0) -- (10, 1) );
draw( (0, -2) -- (10, 3) );
draw( (0, 8) -- (10, -3) );
draw( (2, -2) -- (3, 9) );
draw( (0, 5) -- (10, 4) );
pair p1=(5,3), p2=(7.5, 2.3);
dot("$T_1$", p1, N);
dot("$T_2$", p2, N);
draw(p1--p2, dashed);
clip( box((0,0), (10, 8)) );

\end{asy}
\end{center}
\caption{The regions defined in $T$-space by the equations $E(H, T(q))$ for all $H, q$.
Only one of the transformations $T_1$, $T_2$ needs to be checked.}
\end{figure}

It suffices therefore to check condition \eqref{containment} for one point per region. Let us call these points $T_1, \ldots, T_m$. To guarantee that \eqref{containment} holds for all $T$ it is enough for it to be true for all $T_j$, $j=1, 2, \ldots, m$. Define the bad events
$$
B_j = \bigcap_{q \in P} \Set{T_j(q) \in E}.
$$
We need to ensure that none of the $B_j$ holds, but we only need to check those $B_j$
for which there is a $T$ in the cell of $T_j$ for which \eqref{sep} holds. For such a $j$ the number of different $N^{-\gamma}\times\cdots\times N^{-\gamma}$-sized cubes touched by $T_j(P)$ is the same as the number touched by $T(P)$ which is at least $\alpha(N)$ so
$$
\Prob{B_j} \le p'^{\alpha(N)},
$$
and it is therefore enough to make sure that
$$
n^{d \cdot d_1} p'^{\alpha(N)} = O\left(N^{\zeta \cdot d \cdot d_1} N^{(1+\gamma) d \cdot d_1} p'^{\alpha(N)}\right)
$$
can be made arbitrarily small by choosing $N$ large. This is clearly possible since the term $p'^{\alpha(N)}$ decays faster than any power of $N$.
\end{proof}

\begin{proof}[Proof of Theorem \ref{th:infinite}]
Let $p_n \in (0, 1)$ be such that
\beql{small-sum}
\sum_{n=1}^\infty (1-p_n) < 1-p.
\eeq
Apply Theorem \ref{th:finite} successively for $N=n$, $p_n$, $\zeta = b$, $\alpha(N)=\alpha(R)$, $\beta=f$ and the set $P = \AA \cap [-n, n]^{d_1}$ to obtain sets $E_n \subseteq [-n, n]^d$. Define
$$
E = \bigcap_{n=1}^\infty \left( E_n \cup (\RR^d \setminus [-n, n]^d)\right).
$$
It is easy to see because of \eqref{small-sum} that for any $m \in \ZZ^d$ we have $\Abs{E \cap m+[0, 1]^d} \ge p$. Let $T:\RR^{d_1}\to\RR^d$  and let $R$ be such that $T(\AA) \cap B_R(0)$ contains $\alpha(R)$ points which are $R^{-f}$ separated. Let $n=\Ceil{R}$. It follows from Theorem \ref{th:finite} that $T(\AA) \cap [-n, n]^d$ is not contained in $E_n \cup (\RR^d\setminus[-n, n]^d)$ and therefore not contained in $E$, as we had to show. 
\end{proof}

\bibliographystyle{alpha}
\bibliography{universal-sets}

\end{document}